\theoremstyle{plain}
\newtheorem{theorem}{Theorem}[section] 
\newtheorem{lemmy}[theorem]{Lemma}
\newtheorem{cor}[theorem]{Corollary}
\theoremstyle{remark}
\newtheorem{rem}[theorem]{Remark}
\theoremstyle{definition}
\newcommand{\A}{\mathbb{A}}
\newcommand{\R}{\mathbb{R}}
\newcommand{\C}{\mathbb{C}}
\newcommand{\Z}{\mathbb{Z}}
\newcommand{\Q}{\mathbb{Q}}
\newcommand{\GL}{\operatorname{GL}}
\newcommand{\SL}{\operatorname{SL}}
\newcommand{\Mat}{\operatorname{Mat}}
\newcommand{\N}{\mathbb{N}}
\newcommand{\Ind}{\operatorname{Ind}}
\newcommand{\sgn}{\operatorname{sgn}}
\newcommand{\Vol}{\operatorname{Vol}}
\newcommand{\abs}[1]{\lvert{#1}\rvert}
\newcommand{\abss}[1]{\left\lvert{#1}\right\rvert}
\title{The sup-norm problem beyond the newform}
\author{Edgar Assing}
\newcommand{\Date}{21$^{\mathrm{st}}$ February 2022}
\date{\Date}
\begin{document}

\begin{abstract}
In this note we take up the classical sup-norm problem for automorphic forms and view it from a new angle. Given a twist minimal automorphic representation $\pi$ we consider a special small $GL_2(\Z_p)$-type $V$ in $\pi$ and prove global sup-norm bounds for an average over an orthonormal basis of $V$. We achieve a non-trivial saving when the dimension of $V$ grows.
\end{abstract}
	
\maketitle

\section{Introduction}

It is a classical problem in analysis and mathematical physics, more precisely Quantum Chaos, to bound the $L^{\infty}$-norm of certain eigenfunctions on manifolds. In the most basic situation one considers a Riemann surfaces $X$ of finite volume and eigenfunctions $\phi$ of the Laplace-Beltrami operator $\Delta_X$. A sup-norm bound in the spectral aspect is then an estimates of the form
\begin{equation}
	\frac{\Vert \phi \Vert_{\infty}}{\Vert \phi \Vert_2} \ll_X (1+\abs{t_{\phi}})^{\frac{1}{2}-\delta+\epsilon}, \label{eq:sup-norm_prob}
\end{equation}  
where $\lambda_{\phi}=\frac{1}{4}+t_{\phi}^2$ is the Laplace-Beltrami eigenvalue of $\phi$. The local bound corresponds to $\delta=0$ and is known in great generality. The sup-norm problem asks for improved bounds featuring some $\delta>0$. The sup-norm problem has only been solved for very special surfaces $X$ and is hopeless in general. Indeed there is a well known obstruction to the sup-norm problem coming from large eigenspaces $V_{\lambda}$ given by the inequality
\begin{equation}
	\dim_{\C} V_{\lambda} \ll_X \sup_{\phi\in V_{\lambda}} 	\frac{\Vert \phi \Vert_{\infty}}{\Vert \phi \Vert_2}. \nonumber
\end{equation}
This observation is enough to establish the well known fact, that the local bound (i.e. \eqref{eq:sup-norm_prob} with $\delta=0$) can not be improved for the sphere $X=S^2$. So far we have only described the most basic version of the sup-norm problem which, is already very interesting on its own. In addition it admits many variations which have been studied throughout the years. An example for such a variation is the so called level aspect where the base manifold changes in some convenient family $X_1,X_2,\ldots$ and one keeps track of this change in the sup-norm bound \eqref{eq:sup-norm_prob} using a suitable parameter called the level. Another generalisation that should be mentioned allows $X$ to be a manifold of higher dimension and rank.  

Essentially any progress that has been made towards the sup-norm problem as introduced above relies on the arithmeticity of $X$. The basic idea introduced in the monumental paper \cite{iwaniec-sarnak} is to employ additional symmetries (in the  form of Hecke operators) to build a spectral projector that is sharper than the one constructed with only the Laplace-Beltrami operator at hand. Morally this might be thought of as forcing a multiplicity one situation even if the Laplace-Beltrami eigenspaces can not be rigorously controlled. The result of this method is a bound as in \eqref{eq:sup-norm_prob} with $\delta=\frac{1}{12}$ for compact quotients $X=\Gamma\backslash \mathbb{H}$ constructed from maximal orders in quaternion algebras.

Since its appearance the method from \cite{iwaniec-sarnak} has been tweaked, modified and generalised, see for example \cite{blomer-holowinsky,templier_hybrid,saha, assing_sup1, blomer-maga} and the references within. Much work is concerned with congruence quotients $X=\Gamma_0(N)\backslash \mathbb{H}$ on which so called Hecke-Maa\ss\  newforms are considered. Since these newforms enjoy a nice multiplicity one property they are natural candidates for the sup-norm problem. In this note we are going beyond the case of newforms and consider situations where the dimension of the underlying $p$-adic representation grows. In other words, we solve the sup-norm problem in the dimension aspect. This aspect is a new facet of the sup-norm problem which seems extremely interesting and is not yet well studied. While our result is the first in $p$-adic setting it is only preceded by \cite{blomer-maga-harcos-milicevic} where an archimedean version of this aspect is discussed.  

To explain our result and its connection to the work of Blomer, Harcos, Maga and Mili\'cevi\'c it will be most convenient to leave the classical world of Hecke-Maa\ss-newforms behind and work in the language of automorphic forms and automorphic representations.

The sup-norm problem we will consider is connected to small $\GL_2(\Z_p)$-types in cuspidal automorphic representations $\pi$, where $p>3$ is prime. Comparing this to the recent work \cite{blomer-maga-harcos-milicevic} we are replacing the archimedean place $\infty$ by a finite place $p$ and the minimal $U(2)$-type of some automorphic representation by a suitably chosen $\GL_2(\Z_p)$-type. Note that in order to afford interesting $K$-types at the archimedean place it is necessary to work over fields admitting complex places or in higher rank. In the $p$-adic world we already meet interesting cases when working with automorphic forms for $\GL_2$ over $\Q$.

\subsection{Set-up and main result}

Before we continue our discussion we need to fix some notation. Let $G(R)=\GL_2(R)$ for some ring $R$ and let $\A$ be the adele ring over $\Q$. We will be working with cuspidal automorphic representations $\pi$ of $G(\A)$ with unitary central character $\omega_{\pi}$. Abusing notation we will write $\pi \subset L_0^2(G(\Q)\backslash G(\A),\omega_{\pi})$ assuming that $\pi$ acts on an irreducible subspace of cuspidal automorphic forms by right translation. Given a compact subgroup $H$ we write $\pi^H$ for the space of $H$-invariant elements in $\pi$.

Set $K_{\infty}=SO(2)$ and $K_l=\GL_2(\Z_l)$ for primes $l$. Combining these we get the compact subgroup $K=\prod_v K_v\subset G(\A)$. Given a prime $p>3$ and $m>0$ we consider the smaller compact subgroup $$K(p^m)=K_{\infty}\times K_p(m) \times \prod_{l\neq p} K_l \text{ for }K_p(p^m) =1+p^m\cdot \Mat_{2\times 2}(\Z_p)\subset K_p.$$ Note that $K(p^m)$ is normal and of finite index in $K$.

Throughout we restrict ourselves to the situation where $\pi$ is unramified (i.e. spherical) away from $p$. In particular it is spherical at $\infty$ and one associates the spectral parameter $t_{\pi}$. Set $T=1+\abs{t_{\pi}}$. Further, we have
\begin{equation}
	m_{\pi} = \min \{m\in\N\colon \pi^{K(p^m)}\neq \{0\} \}< \infty.\nonumber
\end{equation}
We set $V=\pi^{K(p^{m_{\pi}})}$ and observe that $\pi\vert_K$ endows $V$ with the structure of a $K$-module. It turns out that, if $\pi$ is twist minimal, $V$ is irreducible (see Lemma~\ref{lm:irred} below). Set $d=\dim_{\C}V$, choose an orthonormal basis $\phi_1,\ldots,\phi_d$ for $V$ with respect to the $L^2_0(G(\Q)\backslash G(\A),\omega_{\pi})$ inner product. Define 
\begin{equation}
	\Phi(g) = \left( \sum_{i=1}^d \abs{\phi_i(g)}^2\right)^{\frac{1}{2}}.\nonumber
\end{equation}
We are concerned with the sup-norm of $\Phi(g)$ and obtain the following theorem which is a close analogue to \cite[Theorem~1]{blomer-maga-harcos-milicevic}.

\begin{theorem}\label{th:main}
Let $p>3$ be prime and suppose $\pi$ is twist minimal. In the notation above we have
\begin{equation}
	\Vert \Phi \Vert_{\infty} \ll T^{\frac{1}{2}+\epsilon}d^{\frac{11}{12}+\epsilon}.\nonumber
\end{equation}
If the (arithmetic)-conductor of $\pi$ is a perfect square (i.e. the exponent-conductor of the $p$-component $\pi_p$ of $\pi$ is even) or the $p$-component $\pi_p$ of $\pi$ is not supercuspidal, then we have the better bound
\begin{equation}
	\Vert \Phi \Vert_{\infty} \ll T^{\frac{1}{2}+\epsilon}d^{\frac{5}{6}+\epsilon}.\label{stronger}
\end{equation}
\end{theorem}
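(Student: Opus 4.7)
The plan is to run the Iwaniec--Sarnak amplification method in an adelic setup, with the novelty being the simultaneous treatment of the prime $p$ and the $K$-type $V$. Since $\Phi^2 = \sum_i |\phi_i|^2$ is $K$-invariant, one bounds $\Phi(g)^2$ via a pre-trace inequality
\begin{equation}
|A(\pi)|^2 \, \Phi(g)^2 \, \leq \, \sum_{\gamma \in G(\Q)} K_f(g,\gamma g), \nonumber
\end{equation}
where $f = f_\infty \otimes f_p \otimes \bigotimes_{l\neq p} f_l$ is a test function whose associated spectral kernel is nonnegative on $\pi$, and $A(\pi)$ is the eigenvalue of the amplifier on $\pi$.

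I would take $f_\infty$ to be the standard $K_\infty$-biinvariant spherical projector concentrated on the spectral parameter $t_\pi$, producing the archimedean local factor $T^{1/2}$. At primes $l\neq p$ I would use a length-$L$ Iwaniec--Sarnak amplifier supported on primes unramified for $\pi$, so that $|A(\pi)|\gg L^{1-\epsilon}$ by the usual eigenvalue-square trick. At the prime $p$ the crucial choice is the Hecke idempotent projecting onto the $K_p$-type $V$, which one can write as
\begin{equation}
f_p \;\propto\; d\cdot \overline{\langle \pi_p(\cdot) v_0, v_0\rangle},\nonumber
\end{equation}
so that it has $L^\infty$-norm on $G(\Q_p)$ of order $d$, is bi-$K_p(p^{m_\pi})$-invariant, acts as the identity on $V$, and annihilates every other $K$-type in the spectral decomposition. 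Irreducibility of $V$ under $K$ (Lemma~\ref{lm:irred}) is what guarantees that this single idempotent suffices without losing positivity on the spectral side.

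The geometric side then reduces to counting rational matrices $\gamma$ with (i) $\det\gamma$ in the amplifier support, of size $L$ or $L^2$; (ii) $g_\infty^{-1}\gamma g_\infty$ in an archimedean box of radius $\asymp T^{-1/2}$ around a given point; and (iii) $g_p^{-1}\gamma g_p$ in the support of $f_p$ inside $G(\Q_p)$. Condition (iii), combined with $|f_p|\ll d$, localises $\gamma$ into a union of $K_p(p^{m_\pi})$-double cosets whose total $p$-adic volume is controlled by $d$ and $m_\pi$. After a Harcos--Templier/Saha type Diophantine analysis over the split and non-split tori through $g$, and after balancing the amplifier length $L$ against the $p$-adic width, one obtains the bound $\Vert\Phi\Vert_\infty^2 \ll T\cdot d^{11/6+\epsilon}$, from which the first claim follows on taking square roots.

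The main obstacle is the sharpness of the $p$-adic lattice-point count. When $\pi_p$ is supercuspidal of odd exponent-conductor the support of $f_p$ is not concentrated in a single bi-$K_p$ double coset: the lowest $K_p$-type of such a representation is spread across a nontrivial union of $K_p$-double cosets, and the Diophantine count loses roughly a factor $p^{1/2}\asymp d^{1/12}$ compared with the optimal situation. When the exponent-conductor is even, or when $\pi_p$ is a principal series or twisted Steinberg (so $m_\pi$ is governed by a parahoric level), the support of $f_p$ is controlled by a single Iwahori- or parahoric-double coset, the Diophantine count becomes as sharp as in the classical newform situation, and one recovers the improved exponent $5/6$ in \eqref{stronger}.
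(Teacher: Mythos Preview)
Your framework---pre-trace inequality with an Iwaniec--Sarnak amplifier away from $p$ and a $K_p$-type projector at $p$---matches the paper's. However, the mechanism you describe for obtaining the saving is not the one that actually works, and as written the argument would not beat the local bound.

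First, a small but telling slip: the Hecke idempotent onto the $K_p$-type $V$ is not a single matrix coefficient $d\cdot\overline{\langle\pi_p(\cdot)v_0,v_0\rangle}$; that function projects onto the line $\C v_0$, not onto $V$. The correct object is (proportional to) the \emph{character} $\overline{\chi_\sigma}$ of the irreducible $\GL_2(\Z/p^{m_\pi}\Z)$-representation $\sigma$ through which $V$ factors. In the paper this arises by summing the individual matrix-coefficient test functions $f_p^{(i)}$ over the basis, and it is precisely the identity $\sum_i f_p^{(i)}(\gamma)=\chi_\sigma(\overline\gamma)$ that drives the argument.

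The real gap is your reliance on the trivial estimate $|f_p|\ll d$. The support of $\chi_\sigma$ is all of $Z\cdot K_p$, so there is no ``localisation into a union of double cosets'' beyond what already holds at level $1$; using only $|\chi_\sigma|\le d$ on the geometric side reproduces the local bound $\Phi\ll T^{1/2}d$ and amplification cannot help. The saving in the paper comes from a \emph{nontrivial} pointwise bound on the character: if $g\in Z\cdot K_{m_\pi,\lambda}\setminus Z\cdot K_{m_\pi,\lambda+1}$ (so $g$ is congruent to a scalar modulo $p^\lambda$ but not modulo $p^{\lambda+1}$), then $|\chi_\sigma(g)|\ll p^\lambda$ in Cases~1,~2,~3.1, and $|\chi_\sigma(g)|\ll p^{(m_\pi+\lambda)/2}$ in Case~3.2. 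One then stratifies the geometric side by $\lambda$, chooses the amplifier support so that no determinant $r\neq 1$ hits $r\equiv 1\ (\mathrm{mod}\ p)$ (forcing only $r=1$ to contribute for $\lambda\ge 1$), and counts matrices in congruence boxes $M_z^{(\lambda)}(1)$. It is this character decay, not any support or volume consideration, that produces the exponents $5/6$ and $11/12$; your explanation of the even/odd dichotomy in terms of double-coset support is incorrect---the dichotomy is exactly the weaker character bound for Case~3.2.

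Finally, the theorem is a \emph{global} sup-norm bound, and the pre-trace argument only covers $y\ll d^{1/2}$ (resp.\ $y\ll d^{1/4}$). You need a separate Whittaker-expansion estimate $\Phi(z)\ll (dT)^\epsilon(d^{1/2}T^{1/6}+dT^{1/2}y^{-1/2})$ to handle the cusp; this requires computing the local Whittaker averages $\mathcal{S}_{\pi_p}(a(y))$ case by case, which is a substantial part of the paper that your sketch omits.
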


While in the spectral aspect (i.e. the $T$-aspect in our statement) we only recover the local bound, the key feature of our theorem is the sub-local exponent in the dimension aspect $d$. Given the obstruction to the sup-norm problem coming from growing eigenspaces the aspect under consideration may seem counter intuitive. However, we are letting the dimension of the eigenspace vary in a controlled manner and manage to show that one can still achieve a considerable power saving in $d$ on average over any orthonormal basis.

Note that the sup-norm bound given in the theorem holds globally. Thus, unlike the one in \cite{blomer-maga-harcos-milicevic}[Theorem~1], no restriction to a compact domain is necessary here.  As usual when proving global sup-norm bound the argument consists of two steps. First, a bound via the Whittaker expansion takes care of the regions close to the cusps. This part of the argument is fairly standard but requires some new computations of ramified Whittaker vectors. Second, a bound obtained from the amplified pre-trace inequality is used to handle the bulk. At this point it becomes crucial that we are only treating the average function $\Phi$. Indeed, this allows us to identify the test function on the geometric side as a character of a finite group. The analysis of this character is carried out in Lemma~\ref{character} below and relies on character tables given in \cite{kutzko}. This is the only place where the assumption $p>3$ is used.

To end this section let us briefly discuss the numerology of the exponents in the $d$-aspect. For simplicity we restrict this discussion to the cases in which our result gives the strong bound \eqref{stronger}. Let us start by talking about the local- (not to say trivial-) bound (in the bulk). To obtain this we can follow Marshall's strategy (see \cite{marshall}) which leads to the following. Let $F$ be any cuspidal automorphic form so that the translates $\phi(\cdot k)$, $k\in K$, generate an irreducible $K$-module $W_F$. Then choosing certain $K$-matrix-coefficients as test functions in the pre-trace inequality yields
\begin{equation}
	\frac{\Vert F\vert_{\Omega} \Vert_{\infty}}{\Vert F\Vert_2} \ll \dim_{\C}( W_F)^{\frac{1}{2}}.\label{eq:local}
\end{equation}
Applying this to $\Phi$ upon noting that $\Vert \Phi \Vert_2= d^{\frac{1}{2}}$ suggests the local bound
\begin{equation}
	\Vert \Phi \Vert_{\infty} \ll d^{1+\epsilon}.\nonumber
\end{equation}
(The same bound can also be obtained from the Whittaker expansion coupled with a suitable generating domain.) Thus amplification allows us to improve the exponent from the local bound by $\frac{1}{6}$, which should be an familiar exponent. More suggestively we can write our main result as $$\frac{\Vert \Phi\Vert_{\infty}}{ \Vert \Phi \Vert_2} \ll d^{\frac{1}{2}-\frac{1}{6}+\epsilon}.$$

One could say that Theorem~\ref{th:main} implies $\Vert \phi_i\Vert_{\infty}\ll d^{\frac{1}{3}}$ on average. Note that if $p^{m_{\pi}}$ agrees with the arithmetic conductor $p^{n_{\pi}}$ of $\pi$, then this result is not very interesting. Indeed, in this case we can generate the elements $\phi_1,\ldots,\phi_d$ in $V$ directly from the newform $\phi_{\circ}$ in $\pi$. By now there are very good bounds for this newform (and thus also for the $\phi_i$'s) known in the literature. See \cite{templier_hybrid} if $m_{\pi}=n_{\pi}=1$ or \cite{comtat} in general. However, in the remaining cases (since $\pi$ is assumed to be twist minimal these correspond to the situation where $\pi$ is supercuspidal at $p$) our result provides new information in the sup-norm problem. Indeed one can still generate $V$ from a translate of the newfom $\phi_{\circ}$. (This is precisely the strategy used in \cite{marshall,saha} to derive local bounds for the newform of arbitrary level using \eqref{eq:local}.) Translated into the level-aspect our result now essentially says that the sup-norm of the $\phi_i$'s is bounded by $p^{\frac{1}{3}\lceil \frac{n_{\pi}}{2}\rceil}$ on average. To the best of our knowledge this can not be derived from any known sup-norm results on the newform $\phi_{\circ}$.

Finally we want to compare our result to the guiding archimedean example \cite[Theorem~1]{blomer-maga-harcos-milicevic}. Recall that we need to replace the $K$-module $V$ by some irreducible $U(2)$ representation $W$. This representation $W$ will occur as the minimal $U(2)$-type in some cuspidal automorphic $\pi$ of $G(\A_{\Q(i)})$. Note that if $\dim_{\C}W\asymp l$ we can think of $\pi$ (or rather $\pi_{\infty}$) having spectral density $\asymp l^2$. This explains the local bounds
\begin{equation}
	\frac{\Vert \Phi \Vert_{\infty}}{\Vert \Phi \Vert_{2}} \ll l^{1+\epsilon} \text{ or } \Vert \Phi\Vert_{\infty} \ll l^{\frac{3}{2}+\epsilon}, \nonumber
\end{equation}
where $\Phi$ is constructed as an average over some suitable basis of $W$ similar to our construction above. As result of an amplification process the authors of \cite{blomer-maga-harcos-milicevic} arrive at
\begin{equation}
	\frac{\Vert \Phi\vert_{\Omega}\Vert_{\infty}}{\Vert \Phi\Vert_2} \ll (l^2)^{\frac{1}{2}-\frac{1}{12}+\epsilon}.\nonumber
\end{equation}
Our notation suggests that in the result from \cite{blomer-maga-harcos-milicevic} the number $l^2$ playes the role of our $d$. This can be explained via the spectral density of $\pi_{\infty}$ and respectively $\pi_p$. Indeed while in the archimedean situation the spectral density is roughly $l^2$ in our case the spectral density is linearly related to $d$. Thus in both cases the square root of the spectral density seems to determine the trivial bound. (This is only reasonable because we are considering minimal or close to minimal $K$-types in both cases.) Note that the quality of the saving $\frac{1}{6}$ in the $p$-adic versus $\frac{1}{12}$ in the archimedean case comes from slightly different behaviour of the spectral transform. 

Finally, let us remark that if the exponent conductor of $\pi_p$ is odd and $\pi_p$ is supercuspidal, then our bounds for the spectral transform, which in this case are linked to certain badly-behaved characters of $\GL_2$ over finite rings, are comparable to those used in \cite{blomer-maga-harcos-milicevic}. This explains that in this case we have matching numerology and obtain only a saving of $\frac{1}{12}$ in the final exponent. Translated to the level aspect our result states that on average the $\phi_i$'s are bounded by $p^{\frac{5(n_{\pi}+1)}{24}}$. Bounds of this quality are known for newforms only in the compact setting, see \cite{saha-hu}.

\begin{rem}
Questions of these type should be even more interesting when considered in higher rank. The reason is that in higher rank the analogously defined small $K$-types can not be generated from translates of the newform. For example if one considers a depth-zero supercuspidal representation $\pi_p$ of $\GL_3(\Q_p)$, then it has (arithmetic)-conductor $p^3$ and the space $\pi_p^{K_p^{(3)}(1)}$, where $K_p^{(3)}(1)$ is the principal congruence subgroup modulo $p$ in $\GL_3(\Z_p)$, is non-zero. However, it seems impossible to find a translate of the newform that generates $\pi_p^{K_p^{(3)}(1)}$. Indeed this would mean finding $g\in \GL_3(\Q_p)$ with $$K_p^{(3)}(1)\subset g^{-1}\left[ \begin{matrix} \Z_p & \Z_p & \Z_p \\ \Z_p & \Z_p & \Z_p \\ p\Z_p & p\Z_p & 1+p\Z_p\end{matrix}\right] g.$$ However, the question treated in this paper still makes sense and trying to answer it is work in progress.
\end{rem}

\noindent\textbf{Acknowledgments:} We would like to thank Prof. Dr. V. Blomer for fruitful discussions and useful comments on an earlier draft of this manuscript. I would also like to thank the anonymous referee for pointing out an oversight in the amplification argument which has now been fixed.

\section{Preliminary considerations}

In this section we are putting in some ground work on which the following sections will rely. 

Recall that $\pi$ was a cuspidal automorphic representation. Since we are assuming that $\pi_v$ is unramified for $v\neq p$, the (arithmetic)-conductor of $\pi$ is $p^{n_{\pi}}$ for $n_{\pi}\in \N\cup\{0\}$. When $n_{\pi}=0$ we have $d=1$ and our theorem reduces to the local bound in the spectral aspect, so that without loss of generality we can assume $n_{\pi}\geq 1$ throughout. By Flath's factorisation theorem we can fix an isomorphism $\pi \cong \bigotimes \pi_v$. Note that also the central character of $\pi$ factors as $\omega_{\pi}=\bigotimes_v \omega_{\pi_v}$ where $\omega_{\pi_v}$ is the central character of $\pi_v$.  For $v\neq p$ we can fix a spherical (i.e. $K_v$-invariant vector)  $\phi_v^{\circ}\in \pi_v^{K_v}$. This vector is unique up to scaling. Recall that $\phi_i$, $i=1,\ldots,d$ forms an orthonormal basis of $V=\pi^{K(p^{m_{\pi}})}$. Thus there is $\phi_p^{(i)}$ so that we can identify
\begin{equation}
	\phi_i = \phi_p^{(i)} \cdot \prod_{v\neq p} \phi_v^{\circ}.\nonumber
\end{equation}
Since the spherical functions $\phi_p^{\circ}$ are well understood much of our work boils down to understanding properties of an orthonormal basis
\begin{equation}
	\text{span}\{ \phi_p^{(1)},\ldots,\phi_p^{(d)} \} = \pi_p^{K_p(m_{\pi})}.\nonumber
\end{equation}
This a purely local problem, which we investigate in the following subsection.

\subsection{Local considerations}

We now focus on properties of the local representation $\pi_p$. We start by recalling the classification of local representations. But before we do so we need some more notation. Given a (quasi)-character $\chi\colon \Q_p^{\times}\to \C^{\times}$ we write $a(\chi)$ for the (exponent)-conductor. Further write $$I_0(p)=\left[\begin{matrix} \Z_p^{\times} &\Z_p \\ p\Z_p & \Z_p^{\times}\end{matrix}\right]\subset K_p$$ for an Iwahori subgroup and put $K_p'=N_{G(\Q_p)}(I_0(p))$. We also need the filtration
\begin{equation}
	K_p'(m) = 1+\left[\begin{matrix}p\Z_p & \Z_p \\ p\Z_p & p\Z_p\end{matrix}\right]^m\nonumber
\end{equation}
of $K_p'$ by normal subgroups. Finally given two quasi characters $\chi_1,\chi_2\colon \Q_p^{\times}\to\C^{\times}$ we form the (normalised) induced representation on $\Ind_B^{G(\Q_p)}(\chi_1\otimes \chi_2)$ as usual. If this representation is irreducible, then we denote the so obtained representation by $\chi_1\boxplus\chi_2$. We write $\text{St}$ for the Steinberg representation which we may identify with the unique irreducible subspace of $\Ind_B^{G(\Q_p)}(\abs{\cdot}^{\frac{1}{2}}\otimes \abs{\cdot}^{-\frac{1}{2}})$. We are now ready to recite the following well known classification.

\begin{lemmy}
The representation $\pi_p$ falls into one of the following three cases:
\begin{itemize}
	\item \textbf{Case 1 (Principal Series):} There are (quasi)-characters $\chi_i\colon \Q_p^{\times}\to \C^{\times}$ such that $\chi_1\chi_2=\omega_{\pi_p}$, $a(\chi_1)+a(\chi_2)=n_{\pi}$ and  $\pi_p = \chi_1\boxplus \chi_2$.
	\item \textbf{Case 2 (Special):} There is a (quasi)-character $\chi\colon \Q_p^{\times}\to \C^{\times}$ with $n_{\pi}=2a(\chi)$ if $a(\chi)>0$ or $n_{\pi}=1$ otherwise, $\chi^2=\omega_{\pi_p}$ and $\pi_p=\chi\otimes \text{St}$.
	\item \textbf{Case 3 (Supercuspidal):} The representation $\pi_p$ is supercuspidal. In this case we can write $\pi_p=\chi \cdot \pi_p'$ for a (quasi)-character $\chi\colon \Q_p^{\times}\to\C^{\times}$ and some twist-minimal representation $\pi_p'$ of conductor $n_{\pi}'$ which is constructed in one of the following two ways:
	\begin{itemize}
		\item \textbf{Case 3.1 ($n_{\pi}'$ even):} There is an irreducible representation $\tau$ of $Z\cdot K_p$ with $\tau\vert_{Z} = \chi^{-2}\cdot \omega_{\pi_p}$ which is invariant by $K_p(\frac{n_{\pi}'}{2})$ so that $\pi_p' = c-\Ind_{ZK_p}^{G(\Q_p)}\tau$.
		\item \textbf{Case 3.2 ($n_{\pi}'$ odd):} There is an irreducible representation $\tau$ of $K_p'$ which is invariant by $K_p'(n_{\pi}'-1)$ with $\tau\vert_Z=\chi^{-2}\cdot \omega_{\pi_p}$ such that $\pi_p'=c-\Ind_{K_p'}^{G(\Q_p)}\tau$.
	\end{itemize}
\end{itemize}
\end{lemmy}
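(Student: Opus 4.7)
My plan is to invoke the standard classification of irreducible admissible infinite-dimensional representations of $\GL_2(\Q_p)$ (as in Bushnell--Henniart, Jacquet--Langlands, Kutzko) and then verify that the numerical data matches. To begin, any such $\pi_p$ is either a fully-induced principal series, a twist of the Steinberg representation, or supercuspidal; since our global $\pi$ is cuspidal with $\pi_v$ generic at all $v\neq p$, the local component $\pi_p$ falls into one of these three classes, so the trichotomy itself is automatic.

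For Case~1, if $\pi_p=\chi_1\boxplus\chi_2$ then acting the center on the induced model gives $\omega_{\pi_p}=\chi_1\chi_2$, and the Casselman newvector formula yields $n_\pi=a(\chi_1)+a(\chi_2)$. For Case~2, writing $\pi_p=\chi\otimes\Stein$ we have $\omega_{\pi_p}=\chi^2$ (since $\Stein$ has trivial central character), and the conductor of $\Stein$ is $1$ while twisting by $\chi$ with $a(\chi)\geq 1$ raises the conductor to $2a(\chi)$; this follows either from the epsilon-factor computation under the local Langlands correspondence or directly from the standard newform theory. In both cases the central character identity and the conductor identity are essentially bookkeeping.

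The substantial content sits in Case~3, and I would handle it by first reducing to the twist-minimal situation: write $\pi_p=\chi\cdot\pi_p'$ with $\pi_p'$ twist-minimal supercuspidal, so that $\omega_{\pi_p}=\chi^2\cdot\omega_{\pi_p'}$. I would then invoke Kutzko's exhaustion theorem: every supercuspidal representation of $\GL_2$ over a non-archimedean local field is compactly induced from an irreducible representation $\tau$ of one of exactly two maximal compact-mod-center subgroups, namely $Z\cdot K_p$ or $K_p'=N_{G(\Q_p)}(I_0(p))$. The key dichotomy is that the former case produces supercuspidals of even conductor exponent and the latter of odd conductor exponent, which is why the split is indexed by the parity of $n_\pi'$. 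The central character relation $\tau|_Z=\chi^{-2}\omega_{\pi_p}$ follows immediately from the fact that the central character of a compactly induced representation is the restriction of $\tau$ to $Z$, combined with $\omega_{\pi_p'}=\chi^{-2}\omega_{\pi_p}$.

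The main obstacle is verifying the specific level through which $\tau$ factors, namely $K_p(n_\pi'/2)$ in Case~3.1 and $K_p'(n_\pi'-1)$ in Case~3.2. This amounts to identifying the depth of $\tau$: since $\pi_p'$ is twist-minimal, a direct computation of the conductor of a compactly induced representation (via the formal degree and the depth of the inducing datum, or equivalently via matching with the epsilon factor of the associated Langlands parameter through a Howe factorisation) pins down exactly which congruence filtration step trivialises $\tau$. I would carry this out by comparing the two sides of $n_\pi'=2\cdot\mathrm{depth}(\tau)$ in the even case and the analogous odd-case formula, using the explicit Bushnell--Kutzko stratum descriptions of minimal supercuspidals. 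Once the depth computation is in place, the statement of the lemma follows.
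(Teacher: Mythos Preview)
Your outline is correct and in fact more detailed than what the paper itself provides: the paper does not prove this lemma at all, introducing it simply as a recitation of ``the following well known classification'' and moving on immediately. Your sketch via the Jacquet--Langlands trichotomy, Casselman's conductor formula, and Kutzko's exhaustion theorem for supercuspidals is the standard way to justify the statement, and nothing in it is problematic. The only remark worth making is that for the level claims in Case~3 (that $\tau$ factors through $K_p(n_\pi'/2)$ resp.\ $K_p'(n_\pi'-1)$) you would in practice simply point to the explicit construction of minimal supercuspidals in Bushnell--Henniart or Kutzko rather than rederive the depth--conductor relation yourself; but as a proof strategy your plan is sound.
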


With this classification at hand we continue to study the subspaces $V$ in more detail.

\begin{lemmy}\label{lm:irred}
Suppose $\pi_p$ is twist minimal, then $V=\pi_p^{K(m_{\pi})}$ is irreducible as $K_p$-module and we have:
\begin{enumerate}
	\item The invariant $m_{\pi}$ is given by $$m_{\pi}=\begin{cases} n_{\pi} &\text{ if } \pi_p \text{ is Case~1},\\ \lfloor \frac{n_{\pi}+1}{2}\rfloor & \text{ if }\pi_p \text{ is Case~2,3.}\end{cases}$$
	\item The dimension of $V$ is given by $$d= p^{m_{\pi}}\cdot \begin{cases} (1+\frac{1}{p}) &\text{ if $\pi_p$ is Case~1,}\\1 &\text{ if $\pi_p$ is Case~2,} \\ (1-\frac{1}{p}) &\text{ if $\pi_p$ is Case~3.1 and }\\ (1-\frac{1}{p^2}) &\text{ if $\pi_p$ is Case~3.2.} \end{cases}$$
\end{enumerate}
\end{lemmy}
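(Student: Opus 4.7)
My strategy is a case-by-case analysis following the classification in the previous lemma, realising $V = \pi_p^{K_p(m_\pi)}$ in each case as an explicit irreducible representation of the finite quotient $K_p / K_p(m_\pi)$, from which irreducibility and the dimension count are immediate.

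\emph{Case 1 (Principal series).} Twist minimality, combined with the standard inequality $a(\chi_2/\chi_1) \le \max(a(\chi_1), a(\chi_2))$, forces (after possibly relabelling) $a(\chi_1) = 0$ and $a(\chi_2) = n_\pi$, since otherwise the twist by $\chi_1^{-1}$ would strictly lower the conductor sum. Using the Iwasawa decomposition $G(\Q_p) = B(\Q_p) K_p$ and the triviality of $\delta_B^{1/2}$ on $K_p$, I would identify $\pi_p^{K_p(m)}$ with the space of functions on $K_p$ that are left equivariant by $\chi_1 \otimes \chi_2$ under $B(\Q_p) \cap K_p$ and right invariant under $K_p(m)$. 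Non-zero such functions exist precisely when the character is trivial on $B(\Q_p) \cap K_p(m)$, i.e., $m \ge n_\pi$, which gives $m_\pi = n_\pi$. The resulting space is then the parabolic induction $\Ind_{B_m}^{G_m}(\chi_1 \otimes \chi_2)$ inside $G_m = \GL_2(\Z/p^m\Z)$, irreducible since $\chi_1 \neq \chi_2$, of dimension $[G_m:B_m] = p^{m-1}(p+1)$.

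\emph{Case 2 (Special).} Twist minimality forces $\chi$ unramified, so $n_\pi = 1$ and $\pi_p|_{K_p} \cong \Stein|_{K_p}$. Taking $K_p(1)$-invariants in the short exact sequence
\begin{equation*}
0 \longrightarrow \Stein \longrightarrow \Ind_B^{G(\Q_p)}(\abs{\cdot}^{1/2} \otimes \abs{\cdot}^{-1/2}) \longrightarrow \triv \longrightarrow 0
\end{equation*}
identifies $V$ with the kernel of the augmentation $\C[\Pb^1(\Fp)] \to \C$, namely the Steinberg representation of $\GL_2(\Fp)$: irreducible of dimension $p$.

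\emph{Case 3 (Supercuspidal).} After untwisting I take $\chi = 1$, so $\pi_p = c\text{-}\Ind_J^{G(\Q_p)} \tau$ with $J = ZK_p$ (Case 3.1) or $J = K_p'$ (Case 3.2). Applying Mackey's formula to the restriction,
\begin{equation*}
\pi_p|_{K_p} = \bigoplus_{g \in K_p \backslash G(\Q_p) / J} \Ind_{K_p \cap g J g^{-1}}^{K_p}(\tau^{g}),
\end{equation*}
the identity double coset contributes $\tau|_{K_p}$ in Case 3.1 and $\Ind_{I_0(p)}^{K_p}(\tau|_{I_0(p)})$ in Case 3.2; each is irreducible as a $K_p$-module and, using $K_p(m_\pi) \subset I_0(p)$ in Case 3.2 together with the level hypothesis on $\tau$, is entirely $K_p(m_\pi)$-fixed. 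Standard dimension formulas for Bushnell--Kutzko cuspidal types then yield $p^{m_\pi}(1 - 1/p)$ (resp. $p^{m_\pi}(1 - 1/p^2)$) for this contribution, matching the claim. The main obstacle---the technical heart of the proof---will be showing that all non-identity double cosets carry no $K_p(m_\pi)$-fixed vectors. I would handle this by computing the intersections $K_p(m_\pi) \cap g J g^{-1}$ explicitly for representatives $\alpha_n = \diag(p^n, 1)$, $n \ge 1$, and tracking how conjugation by $\alpha_n$ shifts the depth filtration of $\tau$, so that the cuspidality of $\tau$ on the appropriate finite quotient---which is precisely what twist minimality encodes---forces the required vanishing.
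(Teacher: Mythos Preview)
Your approach is correct and more explicit than the paper's, which simply cites the literature: \cite{miyauchi-yamauchi_remark} for the values of $m_\pi$ and $d$ in all cases, \cite{casselman} for irreducibility in Case~2, and \cite{loeffler_weinstein} for the full statement in Case~3. What you are doing is effectively reproving the content of those references, and your outline matches the standard arguments found there. Two small comments. First, in Case~1 your stated irreducibility criterion ``$\chi_1\neq\chi_2$'' is the one for $\GL_2(\Fp)$; over $\GL_2(\Z/p^m\Z)$ with $m>1$ the correct condition is that $\chi_1\chi_2^{-1}$ be \emph{primitive} modulo $p^m$, which does hold here since $a(\chi_2)=m_\pi$ and $\chi_1$ is unramified---so your conclusion is right, but the justification should be sharpened. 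Second, the ``technical heart'' you identify in Case~3 (vanishing of $K_p(m_\pi)$-invariants on non-identity double cosets) is exactly the substance of \cite[Theorem~3.5]{loeffler_weinstein}; your sketch via the depth shift under conjugation by $\alpha_n$ is the right mechanism, and carrying it out carefully is a genuine but routine exercise in the Bushnell--Kutzko formalism. The paper sidesteps all of this by citation, which is economical; your route has the merit of being self-contained.
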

\begin{proof}
This is not new and we only have to ensemble the pieces appropriately. Let us proceed case by case.

First, if $\pi_p$ is in Case~1, then twist-minimality implies that $\chi_2$ (or similarly $\chi_1$) is unramified. Thus we have $n_{\pi}=a(\chi_1)$ and the results on $d$ and $m_{\pi}$ follow from \cite[Proposition~4.3]{miyauchi-yamauchi_remark}. Irreducibility can be seen by direct computation.

Second, if $\pi_p$ is in Case~2 and twist minimal, then $\pi_p = \text{St}$ and $n_{\pi}=1$. The results on $d$ and $m_{\pi}$ follow again from \cite[Proposition~4.3]{miyauchi-yamauchi_remark}. In this case irreducibility follows from \cite[Theorem~1]{casselman}.

Finally, if $\pi_p$ belongs to Case~3, then the full statement is given in \cite[Theorem~3.5]{loeffler_weinstein}. (See also \cite[Lemma~4.5, Corollary~4.7]{miyauchi-yamauchi_remark} for the computation of $m_{\pi}$ and $d$.)
\end{proof}

\subsection{A generating domain}

We now switch to the global picture again and aim to produce a suitable set $\mathcal{F}\subset G(\A)$ which reduces our problem to studying 
\begin{equation}
	\mathcal{S}(\Phi,\mathcal{F}) = \sup_{g\in\mathcal{F}} \abs{\Phi(g)}.\nonumber
\end{equation}

Let $\mathcal{F}$ be the standard fundamental domain for $\SL_2(\Z)\backslash \mathbb{H}$, which we identify with a subset of $\GL_2(\R)$ by identifying $z=x+iy\in\mathbb{H}$ with $n(x)a(y)\in B(\R)\subset\GL_2(\R)$. Here 
\begin{equation}
	n(x) =\left(\begin{matrix}
	 1&x\\0&1\end{matrix}\right) \text{ and } a(y) =\left(\begin{matrix}
	 y&0\\0&1\end{matrix}\right).\nonumber
\end{equation}
We further view $\mathcal{F}$ as a subset of $G(\A)$ by identifying it with its image under the usual embedding $G(\R) \to G(\A)$. The same series of identifications allows us to write $\Phi(z)$ for $z\in \mathbb{H}$.

\begin{lemmy}
Suppose $$\mathcal{S}(\Phi,\mathcal{F})\leq A$$  holds for $\Phi^2=\sum_{i=1}^d\abs{\phi_i}^2$ constructed from an arbitrary orthonormal basis $\phi_1,\ldots,\phi_d$ of $V$, then  
\begin{equation}
	\Vert \Phi \Vert_{\infty} \leq A.\nonumber
\end{equation}
\end{lemmy}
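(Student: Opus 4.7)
The plan is to reduce the supremum over $G(\A)$ to a supremum over $\mathcal{F}\subset G(\R)^+$ by exploiting the invariance properties of $\Phi$ and strong approximation for $\GL_2$ over $\Q$.

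First I would verify that $\Phi$ is invariant under right translation by $K$. The key point is that $V=\pi^{K(p^{m_\pi})}$ is stable under the $K$-action (since $K(p^{m_\pi})$ is normal in $K$), so for any $k\in K$ the vectors $\pi(k)\phi_1,\ldots,\pi(k)\phi_d$ form another orthonormal basis of $V$. Writing $\phi_i(gk)=\sum_j u_{ji}(k)\phi_j(g)$ for a unitary matrix $U(k)=(u_{ji}(k))$, one gets
\begin{equation}
\Phi(gk)^2=\sum_i\abs{\phi_i(gk)}^2=\sum_{j,j'}(U(k)U(k)^*)_{jj'}\phi_j(g)\overline{\phi_{j'}(g)}=\Phi(g)^2.\nonumber
\end{equation}
Next, $\Phi$ is clearly left $G(\Q)$-invariant since each $\phi_i$ is automorphic, and it is invariant under the center $Z(\A)$ because $\omega_\pi$ is unitary.

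Now I would apply strong approximation in the form $G(\A)=G(\Q)\cdot(G(\R)^+\times K_{\fin})$, where $K_{\fin}=\prod_{l}K_l$; this uses that $\Q$ has class number one. Given $g\in G(\A)$, write $g=\gamma\cdot g_\infty\cdot k_{\fin}$, so by the invariances just established $\abs{\Phi(g)}=\abs{\Phi(g_\infty)}$. Moreover, the ambiguity in the factorisation is given by $G(\Q)\cap(G(\R)^+\times K_{\fin})=\SL_2(\Z)$, as any such element lies in $\M_2(\Z)$ with $\det\in\Z^\times\cap\R_{>0}=\{1\}$. Hence $g_\infty$ is determined modulo left multiplication by $\SL_2(\Z)$ and right multiplication by $Z(\R)^+K_\infty$.

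Finally, under the identification of $G(\R)^+/(Z(\R)^+K_\infty)$ with the upper half plane $\mathbb{H}$, the standard fundamental domain $\mathcal{F}$ is a set of representatives for the $\SL_2(\Z)$-action. Thus we can find $\gamma_0\in\SL_2(\Z)$ such that $\gamma_0 g_\infty$ lies (modulo center and $K_\infty$) in $\mathcal{F}$, and by the invariances $\abs{\Phi(g)}=\abs{\Phi(\gamma_0 g_\infty)}\leq \mathcal{S}(\Phi,\mathcal{F})\leq A$. Taking the supremum over $g\in G(\A)$ gives $\Vert\Phi\Vert_\infty\leq A$. The argument is essentially bookkeeping; the only subtlety worth checking carefully is the computation of the intersection $G(\Q)\cap(G(\R)^+\times K_{\fin})=\SL_2(\Z)$, which ensures that the fundamental domain of the upper half plane captures all orbits.
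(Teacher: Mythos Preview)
Your argument is correct and follows essentially the same route as the paper: both proofs use strong approximation to write $g=\gamma z b k$ with $b\in\mathcal{F}$ and $k\in K$, and both rely on the fact that $\pi(k)\phi_1,\ldots,\pi(k)\phi_d$ is again an orthonormal basis of $V$. The only cosmetic difference is that you package this as right $K$-invariance of $\Phi$ (which in fact shows $\Phi$ is basis-independent, so the ``arbitrary basis'' hypothesis is redundant), whereas the paper writes $\Phi(bk)=\Phi^{(k)}(b)$ for the average built from the translated basis and then invokes the hypothesis for that basis.
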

\begin{proof}
First we take $g\in G(\A)$ and observe that by strong approximation we can write 
\begin{equation}
	g=\gamma zb k \text{ with }\gamma\in G(\Q),\, z\in Z(\R),\, b\in\mathcal{F} \text{ and }k\in K.\nonumber
\end{equation}
We directly obtain $\abs{\phi_i(g)}=\abs{\phi_i(bk)}$ by automorphy and the action of $Z$ via a unitary character. However, we now observe that if $\phi_1,\ldots,\phi_d$ forms an orthonormal basis of $V$, then so does $\pi(k)\phi_1,\ldots, \pi(k)\phi_d$. Let us write $\Phi^{(k)}$ for the average constructed from the latter basis. We thus have
\begin{equation}
	\Phi(g) \leq \mathcal{S}(\Phi^{(k)}, \mathcal{F})\leq A\nonumber
\end{equation}
by assumption.
\end{proof}

\section{The Whittaker bound}

We will now start the process of deriving a first bound for $\Phi$ which will be valid (high) up in the cusp. This is done by estimating $\Phi$ using the Whittaker expansions of the $\phi_i$'s. Throughout we will be working with an arbitrary orthogonal basis $\phi_1,\ldots,\phi_d$ and consider only $g\in \mathcal{F}$.

\subsection{Reduction to a local problem}

Let $\phi=\phi_i$ for some $i=1,\ldots, d$. The global Whittaker period is given by
\begin{equation}
	W_{\phi}(g) =  \int_{\Q\backslash \A} \phi(n(x)g)\psi_{\A}(x)^{-1}dx.\nonumber
\end{equation}
Where $\psi_{\A}$ is the standard character of $\Q\backslash\A$ which has a factorisation $\psi_{\A} = \bigotimes_{v}\psi_v$ for $\psi_{\infty}(x_{\infty}) = e(x_{\infty})$ and $\psi_l$ unramified for all primes $l$. Note that $W_{\phi}(\cdot)$ is right $K(p^{m_{\pi}})$-invariant and transforms with respect to $\psi_{\A}$ when acted on by $N(\A)$ from the left. Thus a standard trick shows that $W_{\phi}(a(q)g_{\infty})=0$ unless $0\neq q\in \frac{1}{p^{m_{\pi}}}\Z$. Indeed, for any $x$ with $n(x)\in K(p^{m_{\pi}})$, one computes
\begin{equation}
	W(a(q)g_{\infty})=W(a(q)g_{\infty}n(x)) = W(n(xq)a(q)g_{\infty}) = \psi_{\A}(xq)W(a(q)g_{\infty}).\nonumber
\end{equation}
We conclude that, if $W(a(q)g_{\infty})\neq 0$, then we have $\psi_{\A}(xq)=1$ for all such $x$. This gives precisely the condition $q\in \frac{1}{p^{m_{\pi}}}\Z$.

This observation leads to the Whittaker expansion
\begin{equation}
	\phi(g_{\infty}) = \sum_{n\in \Z\setminus \{0\}} W_{\phi}\left(a(\frac{n}{p^{m_{\pi}}})g_{\infty}\right).\nonumber
\end{equation}

We need to exploit the factorisation of the Whittaker function $W_{\phi}$. To do so we first observe that we have the factorisation of Whittaker models
\begin{equation}
	\mathcal{W}(\pi,\psi_{\A}) = \bigotimes_v \mathcal{W}(\pi_v,\psi_v).\nonumber
\end{equation}
Using the factorisation of $\phi$ will now determine distinguished elements in the local Whittaker models as follows. Starting at $v=\infty$ we set
\begin{equation}
	W_v(n(x)a(y)) = \frac{\abs{y}^{\frac{1}{2}}K_{it_{\pi}}(2\pi\abs{y})}{2\abs{\Gamma(\frac{1}{2}+it_{\pi})\Gamma(\frac{1}{2}-it_{\pi})}^{\frac{1}{2}}}e(x),\nonumber
\end{equation}
where $t_{\pi}$ is the spectral parameter of $\pi_{\infty}$. Of course $W_{v}$ is the spherical Whittaker function and is normalised so that
\begin{equation}
	\int_{\R^{\times}} \abs{W_{v}(a(y))}^2\frac{dy}{\abs{y}} = 1, \nonumber
\end{equation}
where $dy$ is the normal Lebesgue measure.

We turn towards the finite places $v\neq p$ given by some prime $l\neq p$. The spherical Whittaker function in $\mathcal{W}(\pi_v,\psi_v)$ is then given by
\begin{equation}
	W_v(a(y)) = \abs{y}_v^{\frac{1}{2}}\lambda_{\pi}(l^{v_l(y)}).\nonumber
\end{equation}
Here $\lambda_{\pi}(n)$ is defined by $$L^{\{\infty,p\}}(s,\pi)= \prod_{v\neq p,\infty} L_v(s,\pi_v)=\sum_{(n,p)=1}\lambda_{\pi}(n)n^{-s}$$ in analytic normalisation. We have set things up so that $W_v(1)=1$.

Finally we turn towards $v=p$. Here we write $W_p^{(i)}$ for the image of $\C \phi_p^{(i)}$ in the Whittaker model $\mathcal{W}(\pi_p,\psi_p)$ such that
\begin{equation}
	\langle W_p^{(i)},W_p^{(i)}\rangle_{\mathcal{W}(\pi_p,\psi_p)} = \int_{\Q_p^{\times}}\abs{W_p^{(i)}(a(y))}^2 \frac{dy}{\abs{y}}=1,\nonumber
\end{equation}
here $dy$ is the Haar measure of $\Q_p$ normalised so that $\Vol(\Z_p,dy)=1$.

With these choices made there is are constants $C_{\pi}^{(i)}\in \C^{\times}$ so that
\begin{equation}
	\frac{W_{\phi_i}(g)}{\Vert \phi_i \Vert_2} = C_{\pi}^{(i)} \cdot \prod_{v}W_v(g_v), \nonumber
\end{equation}
As shown in \cite[Section~4]{lapid-mao_fourier} the absolute values of these constants satisfy
\begin{equation}
	\abs{C_{\pi}^{(i)}}^{2} = \lim_{s\to 1} \frac{\zeta^{\{p,\infty\}}(1)\zeta^{\{p,\infty\}}(2)}{L^{\{p,\infty\}}(s,\pi\otimes\check{\pi})}.\nonumber
\end{equation}
Note that we choose the global measure on $Z(\A)G(\Q)\backslash G(\A)$ to be the Tamagawa measure. In particular, the absolute value is independent of $i$ and using \cite{hoffstein_lockhart} we get
\begin{equation}
	\abs{C_{\pi}^{(i)}}^{2}  \ll_{\epsilon} p^{\epsilon n_{\pi}}\cdot (1+t_{\pi})^{\epsilon}. \nonumber
\end{equation}

Combining everything we end up with
\begin{multline}
	\frac{\phi_i(n(x)a(y))}{\Vert \phi_i\Vert_2} = C_{\pi}^{(i)} \sum_{k\in \N_0}\sum_{\substack{ 0 \neq n\in\Z,\\ (n,p)=1}}\sgn(n)^{\rho}\frac{\lambda_{\pi}\left(n\right)}{\sqrt{\abs{n}}}W_p^{(i)}(a(np^{k-m_{\pi}})) \\
	\cdot W_{\infty}\left(a\left(\frac{n}{p^{m_{\pi}-k}}y\right)\right)e\left(\frac{n}{p^{m_{\pi}-k}}x\right), \nonumber
\end{multline}
for $x\in \R$ and $y\in \R^+$. Here $\rho\in\{0,1\}$ depends on whether $\phi_1,\ldots,\phi_d$ are even or odd.

Let $v_1,\ldots, v_d$ be an orthogonal basis of $\pi_p^{K_p(m_{\pi})}$. We fix a Whittaker functional and thus an embedding $$v\mapsto W_v\in\mathcal{W}(\pi_p,\psi_p).$$ Define
\begin{equation}
	\mathcal{S}_{\pi_p}(g_p) = \sum_{i=1}^d \frac{\abs{W_{v_i}(g_p)}^2}{\langle W_{v_i},W_{v_i}\rangle_{\mathcal{W}(\pi_p,\psi_p)}}.\nonumber
\end{equation}
Note that $\mathcal{S}_{\pi_p}$ is well defined as it is independent of the choice of Whittaker functional and the choice of basis $v_1,\ldots,v_d$. 

\begin{lemmy}\label{lm:first_fourier_red}
For any orthonormal basis $\phi_1,\ldots,\phi_d$ we have
\begin{equation}
	\Phi(g)\leq (dT)^{\epsilon}\sum_{k\in \N_0}\sum_{\substack{ 0 \neq n\in\Z,\\ (n,p)=1}}\frac{\abs{\lambda_{\pi}\left(n\right)}}{\sqrt{\abs{n}}} \cdot \abss{W_{\infty}\left(\frac{n}{p^{m_{\pi}-k}}y\right)} \cdot\mathcal{S}_{\pi_p}(a(np^{k-m_{\pi}}))^{\frac{1}{2}}.\nonumber
\end{equation}
where $g=n(x)a(y)\in \mathcal{F}$.
\end{lemmy}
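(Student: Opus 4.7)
The strategy is to split the Whittaker expansion displayed just above the lemma into a piece that is independent of the basis index $i$ and a piece encoding only the local $p$-adic Whittaker value $W_p^{(i)}$, and then to apply Minkowski's inequality in the $\ell^2$ norm over $i$ to pass that index inside the arithmetic summation. The crucial preliminary observation is that since $\phi_1,\ldots,\phi_d$ form an orthonormal basis of $V$ whose members all agree with the fixed spherical vectors $\phi_v^{\circ}$ at every $v \neq p$, factorisation of the global Petersson inner product into local pairings forces the family $\{W_p^{(i)}\}$ to be pairwise orthogonal in $\mathcal{W}(\pi_p,\psi_p)$. Combined with the normalisation $\langle W_p^{(i)},W_p^{(i)}\rangle = 1$ and the basis-independence of $\mathcal{S}_{\pi_p}$ noted after its definition, this identifies $\sum_i \abs{W_p^{(i)}(g_p)}^2$ with $\mathcal{S}_{\pi_p}(g_p)$.

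With this in hand I would first rewrite the expansion (using $\Vert \phi_i\Vert_2 = 1$) in the form
\begin{equation}
\phi_i(n(x)a(y)) = C_\pi^{(i)} \sum_{k,n} A_{k,n}(x,y)\, W_p^{(i)}(a(np^{k-m_\pi})),\nonumber
\end{equation}
where $A_{k,n}(x,y)$ gathers every factor independent of $i$ (the sign, the Hecke eigenvalue $\lambda_\pi(n)/\sqrt{\abs{n}}$, the archimedean Whittaker value and the additive character). The bound $\abs{C_\pi^{(i)}}^2 \ll p^{\epsilon n_\pi}(1+t_\pi)^{\epsilon}$ combined with the elementary estimate $p^{n_\pi} \ll d^2$ (a quick case-by-case check against Lemma~\ref{lm:irred}) gives $\abs{C_\pi^{(i)}} \ll (dT)^{\epsilon}$ uniformly in $i$.

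Next I would apply the triangle inequality for the $\ell^2$ norm in the variable $i$:
\begin{equation}
\Phi(g) = \Big(\sum_{i=1}^{d}\abs{\phi_i(g)}^2\Big)^{1/2} \ll (dT)^{\epsilon} \sum_{k,n} \abs{A_{k,n}(x,y)} \Big(\sum_{i=1}^{d}\abs{W_p^{(i)}(a(np^{k-m_\pi}))}^2\Big)^{1/2}.\nonumber
\end{equation}
Inserting the explicit form of $A_{k,n}(x,y)$, discarding the unimodular phase factors $\sgn(n)^\rho$ and $e(\cdot)$, and replacing the final bracket by $\mathcal{S}_{\pi_p}(a(np^{k-m_\pi}))^{1/2}$ via the identification above yields exactly the stated bound.

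The main obstacle lies in the identification step: one must reconcile the Tamagawa normalisation of the global Petersson product (which enters the formula for $\abs{C_\pi^{(i)}}^2$) with the local Whittaker pairing, so as to confirm that the $W_p^{(i)}$ form an orthonormal family in $\mathcal{W}(\pi_p,\psi_p)^{K_p(m_\pi)}$. This is precisely the accounting already performed in the preamble to the lemma via a Rankin--Selberg / Hoffstein--Lockhart style computation, so once it is invoked the remainder of the argument is a mechanical application of Minkowski's inequality.
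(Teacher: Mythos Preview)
Your proposal is correct and follows essentially the same route as the paper. The paper writes the expansion as $\phi_i = C_\pi^{(i)}\sum_x a(x)b_i(x)$ with $b_i(x)=W_p^{(i)}(a(x))$, expands $\Phi(g)^2$ as a double sum over $x_1,x_2$, and applies Cauchy--Schwarz to $\sum_i b_i(x_1)\overline{b_i(x_2)}$; this is exactly the computation underlying the Minkowski/triangle inequality you invoke, so the two arguments are the same up to presentation. Your explicit remark that orthonormality of the $\phi_i$ together with the common spherical factors at $v\neq p$ forces the $W_p^{(i)}$ to be an orthonormal basis of $\pi_p^{K_p(m_\pi)}$ (hence $\sum_i\abs{W_p^{(i)}}^2=\mathcal{S}_{\pi_p}$) makes transparent a point the paper leaves implicit.
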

\begin{proof}
To simplify notation we define
\begin{equation}
	a(t) = \sgn(n)^{\rho}\frac{\lambda_{\pi}\left(n\right)}{\sqrt{\abs{n}}}W_{\infty}\left(\frac{n}{p^{m_{\pi}-k}}y\right)e\left(\frac{n}{p^{m_{\pi}-k}}x\right) \text{ and }b_i(t) = W_p^{(i)}(a(np^{k-m_{\pi}})) \nonumber
\end{equation}
if $t=np^{k-m_{\pi}}$ for $k\in\N_0$ and $(n,p)=1$, and $a(t)=0=b_i(t)$ otherwise. The Whittaker expansion now neatly reads
\begin{equation}
	\phi_i(n(x)a(y)) = C_{\pi}^{(i)}\sum_{t\in\Q^{\times}}a(t)b_i(t).\nonumber
\end{equation}
With this at hand we estimate
\begin{align}
	\Phi(g) &= \left(\sum_{i=1}^d\left\vert C_{\pi}^{(i)}\sum_{t\in\Q^{\times}}a(t)b_i(t) \right\vert^2\right)^{\frac{1}{2}} \nonumber \\
	&\leq\max_i\abs{C_{\pi}^{(i)}}\cdot\left(\sum_{t_1\in\Q^{\times}}\sum_{t_2\in\Q^{\times}}a(t_1)\overline{a(t_2)}\sum_{i=1}^d b_i(t_1)\overline{b_i(t_2)}\right)^{\frac{1}{2}} \nonumber \\
	& \ll (dT)^{\epsilon} \left(\sum_{t_1\in\Q^{\times}}\sum_{t_2\in\Q^{\times}}\abs{a(t_1)a(t_2)}\left(\sum_{i=1}^d\abs{b_i(t_1)}^2\right)^{\frac{1}{2}}\left(\sum_{i=1}^d\abs{b_i(t_2)}^2\right)^{\frac{1}{2}}\right)^{\frac{1}{2}} \nonumber\\
	&=(dT)^{\epsilon}\sum_{t\in\Q^{\times}}\abs{a(t)}\left(\sum_{i=1}^d\abs{b_i(t)}^2\right)^{\frac{1}{2}}.\nonumber
\end{align}
The claim follows by inserting the definitions of $a(t)$ and $b_i(t)$.
\end{proof}

Before we can estimate this expression we need to investigate the size of the local average $\mathcal{S}_{\pi_p}(a(y))$. This is the content of the following subsection.

\subsection{Computing the local averages}

The computation of $\mathcal{S}_{\pi_p}(a(y))$ involves a case study and each case will be treated using different techniques. Finally, combining all possible cases, will lead to the bound
\begin{equation}
	\mathcal{S}_{\pi_p}(a(p^{-m_{\pi}}y)) \ll d^{1+\epsilon}\cdot \abs{y}_p. \label{eq:local_bound}
\end{equation}
See Lemma~\ref{1}, \ref{2} \text{ and }\ref{3} below.

\subsubsection{The Steinberg representation}

Let $V=\Ind_B^G(\abs{\cdot}^{\frac{1}{2}}\otimes \abs{\cdot}^{-\frac{1}{2}})$ Then we can identify $\pi=\text{St}$ with the unique irreducible generic subspace of $V$. Let $V^{\vee}=\Ind_B^G(\abs{\cdot}^{-\frac{1}{2}}\otimes \abs{\cdot}^{\frac{1}{2}})$. This is the dual space of $V$ and the invariant bilinear pairing is given by
\begin{equation}
	\langle f,f^{\vee}\rangle = \int_K f(k)f^{\vee}(k)dk.\nonumber
\end{equation}
Further $\tilde{\pi}=\text{St}$ can be identified as the unique irreducible generic sub-quotient of $V^{\vee}$.

Next we choose a basis $v_0,\ldots,v_p$ of $V^{K_p(1)}$. (In an analogous way one constructs the dual basis $v_0^{\vee},\ldots,v_p^{\vee}$ in $(V^{\vee})^{K_p(1)}$.) This is done as follows: we first construct
\begin{equation}
	v_p(g) = \Vol(B(\Z_p)K_p(1),dk)^{-\frac{1}{2}}\cdot \begin{cases} 
		\abs{\frac{a}{d}} &\text{ if }g=\left(\begin{matrix} a&b\\0&d\end{matrix}\right)k\in B(\Q_p)K_p(1),\\
		0&\text{ else.}
	\end{cases}\nonumber
\end{equation}
Further $\gamma_i=wn(i)$ for $i=0,\ldots,p-1$. For consistency of the indices we put $\gamma_p=1$ so that we can identify
\begin{equation}
	B(\Z_p)\backslash K_p/K_p(1) = \{\gamma_0,\ldots, \gamma_p  \} \nonumber
\end{equation}
via the Bruhat decomposition  of $G(\mathbb{F}_p)$. (Note that $\gamma_0=w$.) Finally define $v_i(g) = v_p(g\cdot \gamma_i^{-1})$. This is the desired basis.

Now there is an (up to scaling) unique $\psi_p$-Whittaker functional $\Lambda\colon V\to \C$ (resp. a unique $\psi_p^{-1}$-Whittaker functional $\Lambda^{\vee}\colon V^{\vee}\to\C$). As usual we set 
\begin{equation}
	W_v(g) = \Lambda(g.v) \text{ or }W_{v^{\vee}}(g) = \Lambda^{\vee}(g.v^{\vee}).\nonumber
\end{equation}
We will first consider the related average
\begin{equation}
	\mathcal{S}_V(y) = \sum_{i=0}^p W_{v_i}(a(y))W_{v_i^{\vee}}(a(y)). \nonumber
\end{equation}
Note that also this is independent of the choice of the particular basis $v_0,\ldots,v_p$ as long as one considers the corresponding dual basis of $V^{\vee}$.

We will write $\int^{st}$ for the stable integral as defined \cite[Definition~2.1]{lapid-mao_fourier}. By \cite[Lemma~4.4 and Remark~4.6]{lapid-mao_fourier} we get
\begin{align}
	W_{v_i}(a(y))W_{v_i^{\vee}}(a(y)) &= \int_{\Q_p}^{st}\langle n(x)a(y).v_i,a(y).v_i^{\vee}\rangle\psi_p(x)^{-1}dx\nonumber \\
	&= \abs{y}_p\int_{\Q_p}^{st}\langle n(x)v_i,v_i^{\vee}\rangle\psi_p(xy)^{-1}dx.\nonumber 
\end{align}
Knowing the exact shape of the $v_i$'s we can compute these integrals. First, we observe that a simple change of variables yields
\begin{align}
	\langle n(x).v_i,v_i^{\vee} \rangle &= \int_K v_p(kn(x)\gamma_i^{-1})v_p^{\vee}(k\gamma_i^{-1})dk=\int_K v_p(k\gamma_in(x)\gamma_i^{-1})v_p^{\vee}(k)dk\nonumber\\
	&= \frac{\sharp [B(\Z_p)/K_p(1)\cap B(\Z_p)]^{\frac{1}{2}}}{\Vol(K_p(1),dk)^{\frac{1}{2}}}\cdot \int_{K_p(1)} v_p(k\gamma_in(x)\gamma_i^{-1})dk.\nonumber
\end{align}

The case $i=p$ is somehow special and will be treated later. For now let us assume $0\leq i<p$. In this case we have
\begin{equation}
	\gamma_in(x)\gamma_i^{-1} = wn(i)n(x)n(-i)w^{-1}=wn(x)w^{-1}.\nonumber
\end{equation}
To take advantage of the support of $v_p$ we have to investigate
\begin{equation}
	kwn(x)w^{-1}=b\tilde{k} \in B \cdot K_p(1).\nonumber
\end{equation}
In view of the Iwahori-factorisation of $k$ we find that $n(x)\in N(\Q_p)\cap K_p(1)$ is necessary for the integral to be non-zero. Thus one gets
\begin{equation}
	\langle \pi(n(x))v_i,v_i^{\vee} \rangle =  \delta_{n\in N(p\Z_p)}.\nonumber
\end{equation} 
With this at hand it is easy to compute
\begin{align}
	\int_{\Q_p}^{st}\langle \pi(n(x))v_i,v_i^{\vee} \rangle \psi_p(yx)^{-1}dx = \int_{p\Z_p} \psi(yx)^{-1}dx = p^{-1}\delta_{y\in p^{-1}Z_p},\nonumber
\end{align}
for $0\leq i< p$.

We turn towards $i=p$, so that $\gamma_p=1$. Further we replace $y$ by $yp^{-1}$ and consider $y\in\Z_p$. Recall that every $k\in K_p(1)$ can be written as $k=t_k\overline{n}_kn_k \in B(\Z_p)N(p\Z_p)^t N(p\Z_p)$ by using the Iwahori-factorisation. We obtain
\begin{align}
	&\int_{\Q_p}^{st}\langle \pi_p(n(x))v_p,v_p^{\vee}\rangle \psi_p(yp^{-1}x)^{-1}dx \nonumber \\
	&\quad = \frac{\sharp [B(\Z_p)/K_p(1)\cap B(\Z_p)]^{\frac{1}{2}}}{\Vol(K_p(1),dk)^{\frac{1}{2}}}\cdot\int_{\Q_p}^{st}\int_{K_p(1)} v_p(kn(x))dk \psi_p(yp^{-1}x)^{-1}dx \nonumber \\
	&\quad = \frac{\sharp [B(\Z_p)/K_p(1)\cap B(\Z_p)]^{\frac{1}{2}}}{\Vol(K_p(1),dk)^{\frac{1}{2}}}\cdot \int_{K_p(1)}\int_{\Q_p}^{st} v_p(\overline{n}_kn(x)) \psi_p(yp^{-1}x)^{-1}dxdk.\nonumber
\end{align}
Note that the integrand only depends on $\overline{n}_k$. Therefore we start by discussing a suitable measure on $K_p(1)$. Indeed using the Iwahori factorisation we can write
\begin{equation}
	\int_{K_p(1)} f(k)dk = \frac{\Vol(K_p(1),dk)}{\Vol(B(\Z_p)\cap K_p(1),db)}\int_{B(\Z_p)\cap K_p(1)}p\int_{p\Z_p} f(b\overline{n}(pu))dudb.\nonumber
\end{equation}
If we write $\tilde{v}_p$ to be the re-normalisation of $v_p$ with $\tilde{v}_p(1)=1$, then we have
\begin{align}
	\int_{\Q_p}^{st}\langle \pi_p(n(x))v_p,v_p^{\vee}\rangle \psi_p(yp^{-1}x)^{-1}dx &= p\int_{\Q_p}^{st}\int_{p\Z_p} \tilde{v}_p(n(z)^tn(x)) \psi_p(yp^{-1}x)^{-1}dzdx \nonumber \\
	&= p\int_{\Q_p}^{st}\int_{p\Z_p} \tilde{v}_p\left(\left(\begin{matrix} 1&x-z^{-1} \\ z & zx\end{matrix}\right)\right) \psi_p(yp^{-1}(x-z^{-1}))^{-1}dzdx.\nonumber
\end{align}
In the last step we simply made a change of variables in the $x$-integral. A simple matrix computation shows that
\begin{equation}
	\left(\begin{matrix} 1&x-z^{-1} \\ z & zx\end{matrix}\right) = n(\star)\left(\begin{matrix} (zx)^{-1} & 0 \\ z & zx \end{matrix}\right).\nonumber
\end{equation}
Inserting this and using the transformation behaviour of $\tilde{v}_p$ one obtains
\begin{multline}
	\int_{\Q_p}^{st}\langle \pi_p(n(x))v_p,v_p^{\vee}\rangle \psi_p(yp^{-1}x)^{-1}dx \\ = p\int_{p\Z_p}\psi_p(yz^{-1}p^{-1})\frac{dz}{\abs{z}_p^2}\int_{\Q_p}^{st}\tilde{v}_p\left(\left(\begin{matrix} 1& 0\\x^{-1} & 1 \end{matrix}\right)\right)\psi_p(-xyp^{-1})\frac{dx}{\abs{x}_p^2}.\nonumber
\end{multline}
Both integrals can now be computed quite easily. Starting from the first one we obtain
\begin{multline}
	p\int_{p\Z_p}\psi_p(yz^{-1}p^{-1})\frac{dz}{\abs{z}_p^2} = \sum_{l=1}^{\infty}p^{l+1}\int_{\Z_p^{\times}}\psi_p(zyp^{-1-l})dz \nonumber \\
	= \sum_{l=1}^{v_p(y)-1}p^{l+1}(1-p^{-1})-\delta_{v_p(y)\geq 1}p^{v_p(y)} = -p\delta_{v_p(y)\geq 1}.\nonumber
\end{multline}
Turning to the other integral we find
\begin{align}
	\int_{\Q_p}^{st}\tilde{v}_p\left(\left(\begin{matrix} 1& 0\\x^{-1} & 1 \end{matrix}\right)\right)\psi_p(-xyp^{-1})\frac{dx}{\abs{x}_p^2} &= \int_{\Q_p\setminus \Z_p}^{st}\psi_p(-xyp^{-1})\frac{dx}{\abs{x}_p^2} \nonumber \\
	&= \sum_{l=1}^{\infty}p^{-l}\int_{\Z_p^{\times}}\psi_p(-xyp^{-1-l})dx \nonumber\\
	&= \sum_{l=1}^{v_p(y)-1}p^{-l}(1-p^{-1})-\delta_{v_p(y)\geq 1}p^{-v_p(y)-1} \nonumber \\
	&= \delta_{v_p(y)\geq 2}(p^{-1}-p^{-v_p(y)})-\delta_{v_p(y)\geq 1}p^{-1-v_p(y)}.\nonumber
\end{align}
In particular we have
\begin{equation}
		\int_{\Q_p}^{st}\langle \pi_p(n(x))v_0,v_0^{\vee}\rangle \psi_p(yp^{-1}x)^{-1}dx =\delta_{v_p(y)\geq 2}(p^{1-v_p(y)}-1)+\delta_{v_p(y)\geq 1}p^{-v_p(y)}. \nonumber
\end{equation}
Note that this can be negative, but for non-unitary representations there is no expectation for these integrals to be non-negative.

Combining the computations above and swapping back to $y\in p^{-1}\Z_p$ leads us to the following result.
\begin{lemmy}
In the notation above we have
\begin{equation}
	\mathcal{S}_V(y) = \abs{y}_p\left[\delta_{v_p(y)\geq -1}+\delta_{v_p(y)\geq 0} p^{-1}\abs{y}_p+\delta_{v_p(y)\geq 1}(\abs{y}_p-1)\right].\nonumber 
\end{equation}
\end{lemmy}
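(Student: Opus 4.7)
The plan is to aggregate the $p+1$ local Whittaker-product integrals that were evaluated in the paragraphs immediately preceding the lemma statement. By the identity
$$W_{v_i}(a(y)) W_{v_i^{\vee}}(a(y)) = \abs{y}_p \int_{\Q_p}^{st} \langle n(x) v_i, v_i^{\vee}\rangle \psi_p(xy)^{-1}\, dx$$
of Lapid--Mao, $\mathcal{S}_V(y)$ splits as a sum over $i = 0, 1, \ldots, p$ of quantities that have just been computed. The two qualitatively distinct cases---the $p$ ``generic'' indices with $\gamma_i \in wN$ and the single ``special'' index corresponding to $\gamma = 1$---were treated by different methods above, so what remains is assembly and simplification.

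First I would collect the generic contributions. For these indices the matrix coefficient was identified with the indicator of $N(p\Z_p)$, so the relevant stable integral reduces to $\int_{p\Z_p} \psi_p(xy)^{-1}\, dx = p^{-1}\delta_{v_p(y) \geq -1}$. Multiplying by $\abs{y}_p$ and summing over the $p$ generic indices yields the first term $\abs{y}_p\,\delta_{v_p(y) \geq -1}$ of the stated formula.

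Next I would insert the special-index integral, which was evaluated with the auxiliary character $\psi_p(y p^{-1} x)^{-1}$: after substituting $y \mapsto p y$ in order to match the character $\psi_p(xy)^{-1}$ appearing in the Lapid--Mao identity, the valuations in the indicator conditions shift down by one, and the contribution from the special index becomes
$$\abs{y}_p\bigl[\delta_{v_p(y) \geq 0}\, p^{-2-v_p(y)} + \delta_{v_p(y) \geq 1}(p^{-3-v_p(y)} - p^{-1})\bigr].$$
Rewriting $p^{-a-v_p(y)} = p^{-a}\abs{y}_p$ and adding this to the generic contribution produces exactly the expression claimed in the lemma. The only genuine subtlety---and thus the main obstacle---is the correct bookkeeping of the $p^{-1}$ shift introduced in the special-index stable integral by the Iwasawa decomposition $n(z)^t n(x) = n(\star) \diag((zx)^{-1}, zx)$; this has already been absorbed in the computation above, so the final step is purely algebraic. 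The apparent sign issue (the special contribution can be negative) does not affect the identity since $\mathrm{St}$ is non-unitary at this level.
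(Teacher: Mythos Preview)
Your proposal is correct and matches the paper's approach exactly: the lemma is stated in the paper with no proof beyond ``combining the computations above'', and you have carried out precisely that combination, including the crucial substitution $y\mapsto py$ to reconcile the special-index integral (computed with character $\psi_p(yp^{-1}x)^{-1}$) with the Lapid--Mao identity. One small quibble: the side remark that ``$\mathrm{St}$ is non-unitary at this level'' is not quite right---$\mathrm{St}$ itself is unitary; the point is that the ambient induced module $V$ is not, so the $V\times V^{\vee}$ pairing need not be positive---but this does not affect the argument.
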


We will obtain the desired estimate by relating $\mathcal{S}_{\text{St}}(a(y))$ to $\mathcal{S}_V$. 

\begin{lemmy}\label{1}
For $\pi_p=\text{St}$ and $y\in \Q_p$ we have
\begin{equation}
	\mathcal{S}_{\pi_p}(a(y)) \ll \abs{y}_p.\nonumber
\end{equation}
\end{lemmy}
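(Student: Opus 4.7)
Following the hint given just before the statement, the plan is to extract $\mathcal{S}_{\text{St}}(a(y))$ from the closed-form expression for $\mathcal{S}_V(y)$ just derived. A brief case analysis on $v_p(y)\in\{-1,\,0,\,\geq 1\}$ already shows $\mathcal{S}_V(y)\ll\abs{y}_p$ in every regime, so the game is to control how much $\mathcal{S}_{\text{St}}(a(y))$ can deviate from $\mathcal{S}_V(y)$.

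First, I would decompose $V^{K_p(1)}$ as a module over $K_p/K_p(1)\cong\GL_2(\F_p)$. Elementary finite-group representation theory yields $V^{K_p(1)}\cong\mathbf{1}\oplus\text{St}_{\F_p}$, and since $\text{St}$ is the unique irreducible subrepresentation of $V$ the Steinberg summand coincides with $\text{St}^{K_p(1)}$. Replacing the basis $v_0,\ldots,v_p$ by a new basis $v^{\text{triv}},u_1,\ldots,u_p$ adapted to this decomposition (together with the corresponding dual basis in $V^\vee$) splits
\begin{equation}
\mathcal{S}_V(y) \;=\; W_{v^{\text{triv}}}(a(y))\,W_{v^{\text{triv},\vee}}(a(y)) \;+\; \sum_{i=1}^p W_{u_i}(a(y))\,W_{u_i^\vee}(a(y)). \nonumber
\end{equation}

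Next I would exploit that $\text{St}$ is unitary and self-dual to convert the $V$-pairing into the Whittaker inner product. Orthonormalising the $u_i$ in the Whittaker norm and using the identification $\text{St}^\vee\cong\overline{\text{St}}$ coming from the invariant inner product gives $W_{u_i^\vee}(g)=c\cdot\overline{W_{u_i}(g)}$ for an explicit constant $c$, where the uniformity of $c$ across $i$ follows from $K_p$-equivariance of the construction. Hence the second sum above equals $c\cdot\mathcal{S}_{\text{St}}(a(y))$, while the first term (the trivial contribution) is exactly the $i=0$ piece computed above and is $\ll\abs{y}_p$.

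Combining $\mathcal{S}_V(y)\ll\abs{y}_p$ with the bound on the trivial piece and the estimate $c^{-1}\ll d$ would then yield $\mathcal{S}_{\text{St}}(a(y))\ll d\,\abs{y}_p$, as claimed. The main obstacle will be the careful bookkeeping of the several normalising constants --- the volumes $\Vol(B(\Z_p)K_p(1),dk)$ and $\Vol(K_p(1),dk)$ appearing in the definition of $v_p$, the comparison constant $c$ between the $V$-dual pairing and the Whittaker inner product on $\text{St}^{K_p(1)}$, and the change-of-basis matrix from $\{v_i\}$ to $\{v^{\text{triv}},u_1,\ldots,u_p\}$ --- so that the final loss works out to $d$ and not something larger.
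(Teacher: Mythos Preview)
Your approach is the same idea as the paper's --- split off the spherical line from $V^{K_p(1)}$ and relate the remaining piece to $\mathcal{S}_{\text{St}}$ --- but two points need correction.

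First, the trivial contribution is \emph{not} ``the $i=0$ piece computed above''. The $v_i$ are indexed by cosets in $B(\Z_p)\backslash K_p/K_p(1)$, and the $K_p$-spherical vector in $V^{K_p(1)}$ is (up to scalar) the sum $\sum_{i=0}^p v_i$, not any individual $v_i$. So you cannot simply quote the earlier $i=0$ calculation to control the trivial summand; that summand has not yet been computed in your set-up.

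Second, and more to the point, you have missed the observation that makes the whole normalisation worry evaporate: the trivial contribution is \emph{identically zero}. In $V^\vee=\Ind(\abs{\cdot}^{-1/2}\otimes\abs{\cdot}^{1/2})$ the spherical line is the unique invariant \emph{subspace}, and it is non-generic; the Whittaker functional on $V^\vee$ factors through the generic quotient $\text{St}$. Hence $W_{v^{\text{triv},\vee}}\equiv 0$, and your splitting collapses to $\mathcal{S}_V(y)=\mathcal{S}_{\text{St}}(a(y))$ on the nose (after the obvious rescaling $\langle w_i,w_i^\vee\rangle=1$). The previous lemma then gives $\mathcal{S}_{\text{St}}(a(y))\ll\abs{y}_p$ directly; the factor $d$ in the statement is pure slack here, kept only for uniformity with the other cases. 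No tracking of volume constants or of any comparison constant $c$ is needed.
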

\begin{proof}
Recall that the definitions of $\mathcal{S}_{\pi_p}$ and $\mathcal{S}_V$ are independent of the choice of the underlying basis. Thus we can choose an orthogonal basis $w_1,\ldots,w_p$ of $\pi_p^{K_p(1)}$. Viewing $\pi$ as invariant subspace of $V$ we can assume that the $w_i$'s are in $V$. We then have 
\begin{equation}
	\mathcal{S}_{\pi}(a(y)) = \sum_{i=1}^p \frac{\abs{W_{w_i}(a(y))}^2}{\langle W_{w_i},W_{w_i}\rangle} = \sum_{i=1}^p \frac{W_{w_i}(a(y))W_{w_i^{\vee}}(a(y))}{\langle w_i,w_i^{\vee}\rangle}. \nonumber
\end{equation}
Finally if we choose $w_0^{\vee}\in (V^{\vee})^{K_p(1)}$ in the annihilator of the $w_1,\ldots,w_p$ and let $w_0\in V^{K_p(1)}$ be the dual element then after renormalising we have
\begin{equation}
	\mathcal{S}_V(y) = \mathcal{S}_{\pi}(a(y)) +  W_{w_0}(a(y))W_{w_0^{\vee}}(a(y)).\nonumber
\end{equation} 
However, since there is a unique Whittaker functional on $V^{\vee}$ which descents to the unique Whittaker functional on $\pi$ when viewed as a sub-quotient we must have $W_{w_0^{\vee}}(a(y))=0$. (Since the unique invariant subspace is non-generic.) Thus $\mathcal{S}_V(y) = \mathcal{S}_{\pi_p}(a(y))$ and the desired estimate follows directly from the previous lemma.
\end{proof}

\subsubsection{Twist minimal Principal Series}

Turning to this case we assume that $\pi_p=\chi \cdot \abs{\cdot}^{\rho}\boxplus \abs{\cdot}^{-\rho}$ where $a(\chi)=n_{\pi}>0$. Without loss of generality we can assume that $\chi(p)=1$. (If we assume that $\pi$ is unitary then it is tempered so that $\rho\in i\R$.) Now we can choose a basis in the induced picture essentially as above, but we need to find a suitable decomposition of $B(\Z_p)\backslash K_p/K_p(n_{\pi})$ (since the Bruhat decomposition does not hold in $G(\Z_p/p^m\Z_p)$ if $m>1$). First we start by defining 
\begin{equation}
	v_0(g) = \Vol(B(\Z_p)K_p(m_{\pi}),dk)^{-\frac{1}{2}}\cdot \begin{cases} 
		\chi(a)\abs{\frac{a}{d}}^{\frac{1}{2}+\rho} &\text{ if }g=\left(\begin{matrix} a&b\\0&d\end{matrix}\right)k\in B(\Q_p)K_p(m_{\pi}),\\
		0&\text{ else.}
\end{cases}\nonumber
\end{equation}
From this element we can construct a basis of $\pi_p^{K_p(m_{\pi})}$ as in the Steinberg case. Indeed, we fix a system of representatives $\{\gamma_j\}$ for $B(\Z_p)\backslash K_p/K_p(m_{\pi})$ and set $v_j=\pi_p(\gamma_j^{-1})v_0$.

In order to explicate this basis we need to compute a suitable coset decomposition for $B(\Z_p)\backslash K_p/K_p(m_{\pi})$. This is the content of the following lemma.

\begin{lemmy}
We have
\begin{equation}
	K_p = \bigsqcup_{a\in \Z_p/p^{m_{\pi}}\Z_p} B(\Z_p) \gamma_{0,a} K_p(m_{\pi}) \sqcup \bigsqcup_{i=1}^{m_{\pi}}\bigsqcup_{a\in (\Z_p/p^{m_{\pi}-i}\Z_p)^{\times}} B(\Z_p)\gamma_{i,a} K_p(m_{\pi}).\nonumber
\end{equation}
for 
\begin{equation}
	\gamma_{i,a} = \begin{cases}
		\left(\begin{matrix} 0& -1 \\ 1 & a\end{matrix} \right) & \text{ if }i=0,\\
		\left(\begin{matrix} 1 & 0 \\ ap^i &1 \end{matrix} \right) &\text{ if } 1\leq i\leq m_{\pi}-1, \\
		1_2 &\text{ if }i=m_{\pi}.
	\end{cases} \nonumber
\end{equation}
\end{lemmy}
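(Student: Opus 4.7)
The plan is to exploit the fact that $K_p(m_{\pi})$ is normal in $K_p$, with quotient $K_p/K_p(m_{\pi})\cong\GL_2(\Z_p/p^{m_{\pi}}\Z_p)$, and that $B(\Z_p)$ reduces surjectively onto $B(\Z_p/p^{m_{\pi}}\Z_p)$. The statement then reduces to identifying the reductions $\overline{\gamma}_{i,a}$ as a complete system of representatives for $B(\Z/p^{m_{\pi}}\Z)\backslash\GL_2(\Z/p^{m_{\pi}}\Z)$.

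The representative attached to an arbitrary $k=\smat{a}{b}{c}{d}\in K_p$ will be read off from the bottom row. I would set $i=\min(v_p(c), m_{\pi})$, where $c$ is reduced modulo $p^{m_{\pi}}$. Observe that right multiplication by $K_p(m_{\pi})$ preserves the bottom row modulo $p^{m_{\pi}}$, while left multiplication by $b'\in B(\Z_p)$ rescales $(c,d)$ by the unit $(b')_{22}$. Hence $i$ is a double coset invariant, and so is the class of $dc^{-1}$ modulo $p^{m_{\pi}}$ when $i=0$, respectively the class of $c(dp^i)^{-1}$ modulo $p^{m_{\pi}-i}$ when $1\leq i<m_{\pi}$. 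Concretely: if $i=m_{\pi}$ then the bottom row reduces to $(0,\text{unit})$ and a short direct computation shows $k\in B(\Z_p)K_p(m_{\pi})$; if $i=0$ the matrix identity
\[
\smat{(ad-bc)/c}{a}{0}{c}\cdot\gamma_{0,a'}\equiv k\pmod{p^{m_{\pi}}}
\]
with $a'\equiv dc^{-1}\bmod p^{m_{\pi}}$ produces the required decomposition, and $a'$ varies freely over $\Z_p/p^{m_{\pi}}\Z_p$; and if $1\leq i\leq m_{\pi}-1$, writing $c=p^iu$ with $u\in\Z_p^{\times}$ (forced by invertibility of $k$ together with $v_p(c)=i$), the analogous identity
\[
\smat{(ad-bc)/d}{b}{0}{d}\cdot\gamma_{i,a'}\equiv k\pmod{p^{m_{\pi}}}
\]
holds with $a'\equiv u/d\bmod p^{m_{\pi}-i}$, which is necessarily a unit.

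Disjointness of the resulting double cosets will then be confirmed by an index count: the number of proposed representatives is
\[
p^{m_{\pi}}+\sum_{i=1}^{m_{\pi}-1}p^{m_{\pi}-i-1}(p-1)+1 = p^{m_{\pi}-1}(p+1),
\]
matching the standard evaluation $[\GL_2(\Z/p^{m_{\pi}}\Z):B(\Z/p^{m_{\pi}}\Z)] = p^{m_{\pi}-1}(p+1)$. The one subtle point requiring care is the bookkeeping in the middle strata: one must restrict $a'$ to units modulo $p^{m_{\pi}-i}$, since if $p\mid a'$ then $\gamma_{i,a'}=\smat{1}{0}{a'p^i}{1}$ coincides with $\gamma_{i+1,a'/p}$ and would lie in stratum $i+1$, leading to over-counting. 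With that observation, which is also what the invariance argument above produces, no further conceptual obstacle remains; the rest is routine manipulation.
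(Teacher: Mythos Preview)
Your proposal is correct and follows essentially the same strategy as the paper: both arguments split on $i=\min(v_p(c),m_\pi)$ and exhibit, in each range, an explicit upper-triangular factor placing $k$ in the double coset of the corresponding $\gamma_{i,a}$. Your framing via the normal quotient $K_p/K_p(m_\pi)\cong\GL_2(\Z/p^{m_\pi}\Z)$ and your index count $p^{m_\pi-1}(p+1)$ add a clean verification of disjointness that the paper leaves implicit, but the underlying decomposition and the matrix identities are the same.
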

\begin{proof}
Take $g=\left(\begin{matrix} a&b\\c & d\end{matrix}\right)\in K_p$ and set $i=v_p(c)$. We treat several cases distinguished by the value of $i$.

First, if $i=m_{\pi}$, then we have
\begin{equation}
	g=\left(\begin{matrix} a & b\\ c & d\end{matrix}\right) = \left( \begin{matrix} a&b\\0&d \end{matrix}\right) \left(\begin{matrix} 1-\frac{bc}{ad} & 0 \\ \frac{c}{d} & 1 \end{matrix}\right) \in B(\Z_p)K_p(m_{\pi}).\nonumber
\end{equation}

Second, for $1\leq i<m_{\pi}$ we have
\begin{equation}
	g=\left( \begin{matrix} a-\frac{bc}{d} & b \\ 0&d\end{matrix}\right) \left( \begin{matrix} 1 & 0 \\ \frac{c}{d} & 1 \end{matrix}\right).\nonumber
\end{equation}
By right multiplication with elements in $K_p(m_{\pi})$ we can view $\frac{c}{d} \in p^i\Z_p^{\times}/p^{m_{\pi}}\Z_p$.

The critical contribution is given by the matrices with $i=0$. We can write
\begin{equation}
	g= \left(\begin{matrix} \frac{ad}{c}-b & a+(b-\frac{ad}{c})p^{m_{\pi}}\\ 0 & c \end{matrix}\right)\left(\begin{matrix}0 & -1 \\ 1 & \frac{d}{c} \end{matrix}\right)\left(\begin{matrix} 1+\frac{d}{c}p^{m_{\pi}} & \frac{d^2}{c^2}p^{m_{\pi}} \\ -p^{m_{\pi}} & 1-\frac{d}{c}p^{m_{\pi}} \end{matrix}\right). \nonumber
\end{equation}
\end{proof}

Given $v \in \pi^{K_p(n_{\pi})}$ we can compute the Jacquet Integral as follows. Without loss of generality assume $v_p(y)\geq -n_{\pi}$, since otherwise the Whittaker function $W_v$ vanishes for trivial reasons. We compute
\begin{align}
	W_v(a(y)) &= \int_{\Q_p} v(wn(x)a(y))\psi_p(x)^{-1}dx \nonumber \\
	&= \abs{y}_p^{\frac{1}{2}-\rho}\int_{\Q_p}v(wn(x))\psi_p(xy)^{-1}dx\nonumber \\
	&= \abs{y}_p^{\frac{1}{2}-\rho} p^{-n_{\pi}}\sum_{a\in \Z_p/p^{n_{\pi}}\Z_p} \psi_p(ay)^{-1}v(wn(a)) \nonumber \\
	&\qquad  + \abs{y}_p^{\frac{1}{2}-\rho}\int_{\Q_p\setminus \Z_p}\abs{x}^{-2\rho} \chi(x)^{-1}\psi(xy)^{-1}v(n(x^{-1})^t)\frac{dx}{\abs{x}_p}.\nonumber
\end{align}
Note that $wn(a) = \gamma_{0,a}$. Now we will have a closer look at the remaining integral:
\begin{align}
	&\int_{\Q_p\setminus \Z_p}\abs{x}^{-2\rho} \chi(x)^{-1}\psi(xy)^{-1}v(n(x^{-1})^t)\frac{dx}{\abs{x}_p} \nonumber \\
	&\qquad = \int_{p\Z_p}\abs{x}^{2\rho} \chi(x)\psi(x^{-1}y)^{-1}v(n(x)^t)\frac{dx}{\abs{x}_p} \nonumber\\
	&\qquad = v(1)\int_{p^{m_{\pi}}\Z_p}\chi(x)\abs{x}^{2\rho}\psi(x^{-1}y)^{-1}\frac{dx}{\abs{x}_p} \nonumber \\
	&\qquad\qquad +\sum_{i=1}^{m_{\pi}-1}\sum_{b\in (\Z_p/p^{m_{\pi}-i}\Z_p)^{\times}}p^{-2\rho i}\chi(b)^{-1}v(n(bp^i)^t)\cdot \int_{1+p^{m_{\pi}-i}\Z_p}\chi(x)^{-1}\psi(-byxp^{-i})dx.\nonumber
\end{align}
Note that $n(bp^i)^t=\gamma_{i,b}$.

Since the $v(1)$-contribution is easily computed we arrive at the following lemma.
\begin{lemmy}
For $v_p(y)\geq -m_{\pi}$ we have
\begin{align}
	W_v(a(y)) &= \abs{y}_p^{\frac{1}{2}-\rho}p^{-m_{\pi}}\sum_{a\in \Z_p/p^{m_{\pi}}\Z_p}\psi_p(ay)^{-1}v(\gamma_{0,a}) \nonumber \\
	&\qquad + \abs{y}_p^{\frac{1}{2}-\rho}\sum_{i=1}^{m_{\pi}-1}\sum_{b\in (\Z_p/p^{m_{\pi}-i}\Z_p)^{\times}}p^{-2\rho i}\chi(b)^{-1}v(\gamma_{i,b})G_{m_{\pi}-i}(-byp^{-i},\chi^{-1}) \nonumber\\
	&\qquad +\abs{y}_p^{\frac{1}{2}-\rho}\cdot  v(\gamma_{m_{\pi},0})\cdot \int_{p^{m_{\pi}}\Z_p}\chi(x)\abs{x}^{2\rho}\psi(x^{-1}y)^{-1}\frac{dx}{\abs{x}_p}. \nonumber
\end{align}
for
\begin{equation}
	G_l(y,\chi) = \int_{1+p^l\Z_p}\chi(x)\psi(yx)dx.\nonumber
\end{equation}
\end{lemmy}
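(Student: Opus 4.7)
The statement essentially packages the three streams of contributions already arising in the display preceding it into a form indexed by the coset representatives $\gamma_{i,b}$ from the previous lemma. I would structure the proof as a direct computation in three stages.

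First, after extracting the factor $\abs{y}_p^{\frac{1}{2}-\rho}$ via left-equivariance of $v$ under $B(\Q_p)$ and rescaling, the Jacquet integral becomes
\begin{equation*}
W_v(a(y))=\abs{y}_p^{\frac{1}{2}-\rho}\int_{\Q_p}v(wn(x))\psi_p(xy)^{-1}\,dx,
\end{equation*}
which I would split into integrals over $\Z_p$ and $\Q_p\smallsetminus\Z_p$. On $\Z_p$, right $K_p(m_{\pi})$-invariance of $v$ together with the identification $wn(a)=\gamma_{0,a}$ from the previous lemma collapses the integral to the first displayed sum with coefficient $p^{-n_{\pi}}$.

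For the non-compact part I would use the Iwasawa decomposition
\begin{equation*}
wn(x)=\begin{pmatrix} 1/x & -1 \\ 0 & x \end{pmatrix}n(1/x)^{t},\qquad \abs{x}>1,
\end{equation*}
which in the induced picture extracts the factor $\chi(x)^{-1}\abs{x}^{-1-2\rho}$, and then substitute $x\mapsto x^{-1}$ to move the integration onto $p\Z_p\smallsetminus\{0\}$. I would then stratify by $i=v_p(x)\geq 1$. For $1\leq i\leq m_{\pi}-1$, parametrizing the stratum by $x=bp^{i}u$ with $b$ running through representatives of $(\Z_p/p^{m_{\pi}-i}\Z_p)^{\times}$ and $u\in 1+p^{m_{\pi}-i}\Z_p$, the matrix identity $n(bp^{i}u)^{t}=\gamma_{i,b}\cdot n(p^{m_{\pi}}bw)^{t}\in\gamma_{i,b}K_p(m_{\pi})$ (using $u=1+p^{m_{\pi}-i}w$) lets me pull $v(\gamma_{i,b})$ out of the inner $u$-integral; the residual $u$-integral is then, after a natural change of variable, precisely $G_{m_{\pi}-i}(\cdot,\chi^{-1})$ with the expected argument, multiplied by the factor $p^{-2\rho i}\chi(b)$ coming from $\abs{x}^{2\rho}$ and $\chi(x)$. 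For $v_p(x)\geq m_{\pi}$, the element $n(x)^{t}$ lies in $K_p(m_{\pi})$, so $v(n(x)^{t})=v(1)=v(\gamma_{m_{\pi},0})$, leaving the last displayed integral intact.

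The only real subtlety is verifying that the summand $\chi(b)v(\gamma_{i,b})G_{m_{\pi}-i}(\cdot,\chi^{-1})$ is independent of the lift of $b\in(\Z_p/p^{m_{\pi}-i}\Z_p)^{\times}$ to $\Z_p^{\times}$, since $\chi$ has conductor $p^{m_{\pi}}>p^{m_{\pi}-i}$: the discrepancy in $\chi(b)$ under $b\mapsto b(1+p^{m_{\pi}-i}\Z_p)$ is absorbed by a compensating change of variable in the Gauss sum, while $v(\gamma_{i,b})$ is already independent of the lift by the coset calculation of the previous lemma. No genuinely new ideas beyond the display preceding the statement are required; the lemma merely reorganises that calculation according to the coset decomposition.
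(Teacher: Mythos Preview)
Your proposal is correct and follows essentially the same route as the paper: the display immediately preceding the lemma already carries out the Jacquet integral, the split into $\Z_p$ and $\Q_p\setminus\Z_p$, the Iwasawa rewrite on the non-compact part, the substitution $x\mapsto x^{-1}$, and the stratification by $v_p(x)$, with the lemma just packaging the outcome via the coset representatives $\gamma_{i,b}$. Your additional remark on the independence of the lift of $b$ is a welcome clarification that the paper leaves implicit.
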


This supplies us with the necessary ingredients to show the required estimate for $\mathcal{S}_{\pi_p}$.
\begin{lemmy}\label{2}
For $\pi_p=\chi\abs{\cdot}^{\rho}\boxplus \abs{\cdot}^{-\rho}$ unitary and $y\in \Q_p$ we have 
\begin{equation}
	\mathcal{S}_{\pi_p}(a(p^{-m_{\pi}}y)) \ll d^{1+\epsilon}\abs{y}_p. \nonumber
\end{equation}
\end{lemmy}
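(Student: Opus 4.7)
The plan is to construct an explicit orthogonal basis of $\pi_p^{K_p(m_\pi)}$ from the coset decomposition of the previous lemma, to evaluate the Jacquet integral formula on each basis element, and to estimate the resulting sum stratum by stratum.

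For each coset representative $\gamma_{i,a}$ of $B(\Z_p)\bs K_p/K_p(m_\pi)$ I would set $v_{i,a}(g)=v_0(g\gamma_{i,a}^{-1})$. Since $v_0$ is supported on $B(\Z_p)K_p(m_\pi)$, these vectors have pairwise disjoint supports and form an orthogonal basis of $\pi_p^{K_p(m_\pi)}$ with common squared norm $\langle v_0,v_0\rangle=1$; because each $\gamma_{i,a}$ lies in $K_p$, the Whittaker-model norms $\langle W_{v_{i,a}},W_{v_{i,a}}\rangle$ all agree with a single constant $c_{\pi_p}$ which is $\asymp 1$ up to $p^{\epsilon m_\pi}$. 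Crucially, in the Jacquet-integral formula of the preceding lemma, only the single term indexed by $(i,a)$ survives when $v=v_{i,a}$, because $v_{i,a}(\gamma_{j,b})$ vanishes unless $(j,b)=(i,a)$, in which case it equals $v_0(1)$. An elementary volume count gives $\abs{v_0(1)}^2=\Vol(B(\Z_p)K_p(m_\pi))^{-1}=d$, which is the source of the overall factor $d$ in the final bound.

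Next I would estimate the contribution of each stratum to $\mathcal{S}_{\pi_p}(a(p^{-m_\pi}y))$. The $i=0$ stratum comprises $p^{m_\pi}$ trivial-phase terms each of squared modulus $\asymp d\abs{y}_p/p^{m_\pi}$, and therefore contributes $\ll d\abs{y}_p$. For the middle strata $1\le i\le m_\pi-1$ the task reduces to bounding $\sum_{b\in(\Z_p/p^{m_\pi-i}\Z_p)^\times}\abs{G_{m_\pi-i}(-byp^{-i},\chi^{-1})}^2$; twist minimality forces $a(\chi)=m_\pi$, so $\chi\vert_{1+p^{m_\pi-i}\Z_p}$ has conductor exactly $p^i$, and a stationary phase analysis yields square-root cancellation $\abs{G_{m_\pi-i}(-byp^{-i},\chi^{-1})}\ll p^{-(m_\pi-i)}$ supported on a congruence class of $b$ of size $\asymp p^{m_\pi-2i}$ and $0$ elsewhere, giving a per-stratum contribution of $\ll d\abs{y}_p$. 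Finally the terminal $i=m_\pi$ contribution is a single elementary integral which is readily bounded by $\ll d\abs{y}_p/p^{m_\pi}$, hence negligible. Summing the $m_\pi+1=O(\log d)$ strata produces $\mathcal{S}_{\pi_p}(a(p^{-m_\pi}y))\ll d^{1+\epsilon}\abs{y}_p$.

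The principal obstacle is the stationary phase analysis of $G_{m_\pi-i}(z,\chi^{-1})$ in the quadratic regime $i<m_\pi/2$: there the map $u\mapsto\chi(1+p^{m_\pi-i}u)$ is not a genuine additive character because the cross term $\chi(1+p^{2(m_\pi-i)}uv)$ appearing in the expansion of $\chi((1+p^{m_\pi-i}u)(1+p^{m_\pi-i}v))$ is no longer trivial. One therefore has to invoke the fine structure of twist-minimal characters, essentially a quadratic correction to the usual exponential parametrisation $\chi(1+p^{m_\pi-i}u)=\psi_p(\alpha up^{-i}+Q(u))$, in order to extract the claimed square-root cancellation. This is routine but slightly delicate, and foreshadows the analogous computation required in the supercuspidal case treated in the next subsection.
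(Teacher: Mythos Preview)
Your plan is essentially the paper's own proof: the same basis $v_{i,a}=\pi_p(\gamma_{i,a}^{-1})v_0$, the same stratum decomposition $\mathcal{S}_{\pi_p}=\sum_i\mathcal{S}_i$, and the same reduction to the incomplete Gau\ss\ sums $G_l$. The paper simply quotes an explicit evaluation of $G_l$ (from \cite{assing}) in place of your stationary-phase sketch, and records the Whittaker norm exactly as $(1+p^{-1})$ rather than $\asymp 1$.

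Two small slips are worth correcting. First, you have the regimes reversed: for $i<m_\pi/2$ one has $l=m_\pi-i>m_\pi/2$, so $2l>m_\pi=a(\chi)$ and the cross term $\chi(1+p^{2l}uv)$ \emph{is} trivial; the genuinely quadratic range is $i>m_\pi/2$. Your numerology ``$|G|\ll p^{-(m_\pi-i)}$ on a class of size $p^{m_\pi-2i}$'' is correct only in the linear range; in the quadratic range the paper's formula gives instead $|G_l|^2=p^{-m_\pi}$ supported on a single residue class of $b$, but the product is again $p^{-m_\pi}$ so the stratum bound is unchanged. Second, the terminal stratum $i=m_\pi$ is \emph{not} negligible: the integral there is a complete Gau\ss\ sum of modulus $p^{-m_\pi/2}$, giving $\mathcal{S}_{m_\pi}(a(p^{-m_\pi}y))\asymp d|y|_p$, the same size as every other stratum (not $d|y|_p/p^{m_\pi}$). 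This does not affect the final bound.
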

\begin{proof}
Note that since all $v_j$'s are translates of $v_0$ their Whittaker-norm all coincides. So it suffices to compute one of these norms and it is easy to see that 
\begin{equation}
	\langle W_{v_0},W_{v_0}\rangle = \int_{\Q_p^{\times}}\abs{W_{v_0}(a(y))}^2\frac{dy}{\abs{y}} = (1+p^{-1}). \nonumber
\end{equation}

Next we observe that one can choose representatives so that $v_j=\pi_p(\gamma_{i,a}^{-1})v_0$ for some $i=i(j)$ and $a=a(j)$. In particular, we can sort the terms of the sum $\mathcal{S}_{\pi_p}(a(y))$ according to this $i$. We get
\begin{equation}
	\mathcal{S}_{\pi_p}(a(y)) = \sum_{i=0}^{m_{\pi}} \mathcal{S}_i(y) \text{ for } \mathcal{S}_i(y) = (1+p^{-1})^{-1}\sum_{a\in \Z_p/p^{m_{\pi}-i}\Z_p} \abs{W_{\pi_p(\gamma_{i,a}^{-1})v_0}(a(y))}^2.\nonumber
\end{equation}
Applying the previous Lemma with $v=\pi_p(\gamma_{i,a}^{-1})v_0$ and taking support properties of $v_0$ into account provides us with nice formulae for the $W_{\pi_p(\gamma_{i,a}^{-1})v_0}(a(y))$. 

As soon as we can show that $\mathcal{S}_i(y)\ll \abs{y}_p$ for all $i$ we are done. We start with $i=0$. Here we have the explicit formula
\begin{equation}
	\mathcal{S}_i(y) = (1+p^{-1})^{-1}\sum_{a\in \Z_p/p^{m_{\pi}}\Z_p} p^{-2m_{\pi}}\abs{y}_p v_0(1)^2 = \frac{(1+p^{-1})^{-1}p^{-m_{\pi}}\abs{y}_p}{ \Vol(B(\Z_p)K_p(m_{\pi}),dk)} = \abs{y}_p\nonumber
\end{equation}
for $y\in p^{-m_{\pi}}\Z_p$. 

We turn towards $1\leq i \leq m_{\pi}-1$. In this range we get
\begin{align}
	\mathcal{S}_i(y) &= (1+p^{-1})^{-1}v_0(1)^2\abs{y}_p \sum_{a\in (\Z_p/p^{m_{\pi}-i}\Z_p)^{\times}}\abs{G_{m_{\pi}-i}(-ayp^{-i},\chi^{-1})}^2.\nonumber
\end{align}
Thus we need to bound the integrals $G_l(z,\chi)$, which are somehow incomplete Gau\ss\  sums in the sense that one sums only over a specific congruence class. These sums were essentially computed in the proof of \cite[Lemma~5.8]{assing}. Indeed one extracts
\begin{equation}
	G_l(zp^{-k},\chi) = \begin{cases}
		\epsilon(\frac{1}{2},\chi^{-1})\chi^{-1}(z)p^{-\frac{k}{2}} &\text{ if }l\leq \lfloor \frac{a(\chi)}{2}\rfloor, k=a(\chi) \text{ and }z\in b(\chi)+p^l\Z_p,\\
		\psi_p(zp^k)p^{-l} &\text{ if }l\geq \lceil \frac{a(\chi)}{2}\rceil, k=a(\chi) \text{ and }z\in -b(\chi)+p^{a(\chi)-l}\Z_p,\\
		0&\text{ else.}
	\end{cases} \nonumber
\end{equation}
for $z\in \Z_p^{\times}$, $k\in \Z$ and $b(\chi)\in \Z_p^{\times}$ is determined by $\chi$. With this at hand we can easily evaluate $\mathcal{S}_i$. For $i\leq \lfloor \frac{m_{\pi}}{2}\rfloor$ we have
\begin{align}
	\mathcal{S}_i(y) &= \delta_{v_p(y) =i-m_{\pi}}(1+p^{-1})^{-1}v_0(1)^2\abs{y}_p \sum_{b\in (\Z_p/p^{m_{\pi}-i}\Z_p)^{\times}}p^{2i-2m_{\pi}}\delta_{-byp^{-v_p(y)}\in -b(\chi)+p^i\Z_p} \nonumber\\
	&= \delta_{v_p(y) =i-m_{\pi}}\abs{y}_p.\nonumber
\end{align}
Similarly for $i\geq \lceil \frac{m_{\pi}}{2}\rceil$ we have
\begin{align}
	\mathcal{S}_i(y) &= \delta_{v_p(y) =i-m_{\pi}}(1+p^{-1})^{-1}v_0(1)^2\abs{y}_p \sum_{b\in (\Z_p/p^{m_{\pi}-i}\Z_p)^{\times}}p^{-m_{\pi}}\cdot \delta_{-byp^{-v_p(y)}\in b(\chi)+p^{m_{\pi}-i}\Z_p} \nonumber\\
	&= \delta_{v_p(y) =i-m_{\pi}}\abs{y}_p.\nonumber
\end{align}

Finally consider $i=m_{\pi}$. We have
\begin{align}
	\mathcal{S}_{m_{\pi}}(y) &= (1+p^{-1})v_0(1)^2\abs{y}_p \abs{\int_{p^{m_{\pi}}\Z}\chi(x)\abs{x}^{2\rho}\psi(x^{-1}y)^{-1}\frac{dx}{\abs{x}_p}}^2 \nonumber \\
	&= p^{m_{\pi}}\abs{y}_p\cdot \abs{\int_{p^{m_{\pi}}\Z_p}\chi(x)\abs{x}^{2\rho}\psi(x^{-1}y)^{-1}\frac{dx}{\abs{x}_p}}^2.\nonumber
\end{align}
Therefore it suffices to compute the remaining integral. By some basic Gau\ss\  sum evaluations one gets
\begin{equation}
	\int_{p^{m_{\pi}}\Z_p}\chi(x)\abs{x}^{2\rho}\psi(x^{-1}y)^{-1}\frac{dx}{\abs{x}_p} = \delta_{y\in \Z_p}\epsilon(\frac{1}{2},\chi)\chi(y)p^{-\frac{m_{\pi}}{2}-2\rho[v_p(y)+m_{\pi}]}. \nonumber
\end{equation}
Inserting this above concludes the proof since it implies $\mathcal{S}_{m_{\pi}}(y) =\delta_{v_p(y)>0}\abs{y}_p$.
\end{proof}

\subsubsection{Supercuspidal representations}

Let $\mathfrak{X}_k$ be the set of character $\chi\colon \Q_p^{\times} \to\C^{\times}$ with $a(\chi)\leq k$ and $\chi(p)=1$. Note that
\begin{equation}
	\sharp \mathfrak{X}_k = p^{k-1}(p-1).\nonumber
\end{equation}
For $\chi\in \mathfrak{X}_k$ and $m\in \Z$ we will consider the functions $\xi_{\chi}^{(m)} \in \mathcal{C}_c^{\infty}(\Q_p^{\times})$ given by
\begin{equation}
	\xi_{\chi}^{(m)}(y) = \mathbbm{1}_{p^{-m}\Z_p^{\times}}(y)\chi(y).\nonumber
\end{equation}
Given any representation $\pi_p$ we write $\mathcal{K}_{\psi_p}(\pi_p)$ for the corresponding $\psi_p$-Kirillov model. Note that this model contains the Schwartz functions so that we have $\xi_{\chi}^{(m)}\in \mathcal{K}_{\psi_p}(\pi_p)$. Note that by construction of the Kirillov model we have
\begin{equation}
	W_f(a(y))=f(y) \text{ for } f\in\mathcal{K}_{\psi_p}(\pi_p) \text{ and }y\in \Q_p^{\times}.\nonumber
\end{equation}
Thus we compute
\begin{multline}
	\langle W_{\xi_{\chi_1}^{(m_1)}},W_{\xi_{\chi_2}^{(m_2)}}\rangle = \int_{\Q_p^{\times}}\xi_{\chi_1}^{(m_1)}(y)\overline{\xi_{\chi_2}^{(m_2)}(y)}d^{\times}y = \delta_{m_1=m_2} \int_{p^{-m_1}\Z_p^{\times}}\chi_1(y)\chi_2^{-1}(y)d^{\times}y\\ = \delta_{\substack{m_1=m_2,\\ \chi_1=\chi_2}}.\nonumber
\end{multline}
This suffices to compute $S_{\pi_p}(a(y))$ for supercuspdial representations $\pi_p$.

\begin{lemmy}\label{3}
Suppose $\pi_p$ is a twist minimal supercuspidal representation with (exponent)-conductor $n_{\pi}$. Then the following is true
\begin{itemize}
	\item If $n_{\pi}=2m_{\pi}$, then
	\begin{equation}
		S_{\pi_p}(a(y)) = \frac{p^{m_{\pi}}}{\zeta_p(1)}\cdot \delta_{y\in p^{-m_{\pi}}\Z_p^{\times}}.\nonumber
	\end{equation}
	\item If $n_{\pi} = 2m_{\pi}-1$, then 
	\begin{equation}
		S_{\pi_p}(a(y)) = \frac{p^{m_{\pi}}}{\zeta_p(1)}\cdot \delta_{y\in p^{-m_{\pi}}\Z_p^{\times}}+\frac{p^{m_{\pi}-1}}{\zeta_p(1)}\cdot \delta_{y\in p^{1-m_{\pi}}\Z_p^{\times}}.\nonumber
	\end{equation}
\end{itemize}
In general we have the bound $$S_{\pi_p}(a(p^{-m_{\pi}}y)) \ll d\cdot\abs{y}_p\cdot \delta_{y\in \Z_p}.$$
\end{lemmy}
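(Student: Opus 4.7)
My plan is to identify an explicit orthonormal basis of $\pi_p^{K_p(m_\pi)}$ inside the $\psi_p$-Kirillov model consisting of the functions $\xi_\chi^{(m)}$ introduced just above the lemma, after which the computation of $\mathcal{S}_{\pi_p}(a(y))$ will reduce to trivially summing squared moduli of unitary characters on the relevant cosets.

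The first step is a dimension count guided by Lemma~\ref{lm:irred}: in Case~3.1 (where $n_\pi = 2m_\pi$) we have $d = p^{m_\pi-1}(p-1) = \sharp\mathfrak{X}_{m_\pi}$, while in Case~3.2 (where $n_\pi = 2m_\pi - 1$) we have $d = p^{m_\pi-1}(p-1) + p^{m_\pi-2}(p-1) = \sharp\mathfrak{X}_{m_\pi} + \sharp\mathfrak{X}_{m_\pi-1}$. These counts suggest that the orthonormal basis of $\pi_p^{K_p(m_\pi)}$ is given by $\{\xi_\chi^{(m_\pi)}\}_{\chi\in\mathfrak{X}_{m_\pi}}$ in Case~3.1, and by the disjoint union $\{\xi_\chi^{(m_\pi)}\}_{\chi\in\mathfrak{X}_{m_\pi}} \sqcup \{\xi_\chi^{(m_\pi-1)}\}_{\chi\in\mathfrak{X}_{m_\pi-1}}$ in Case~3.2. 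Orthonormality of these families in the Whittaker pairing was already established in the paragraph preceding the lemma.

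The main obstacle will be showing that these candidate vectors actually lie in $\pi_p^{K_p(m_\pi)}$. Invariance under $B(\Z_p) \cap K_p(m_\pi)$ is immediate: the character property of $\chi$ together with the support condition $y \in p^{-m}\Z_p^\times$ and the standard formula $n(x).f(y) = \psi_p(xy)f(y)$ give it at once. Controlling the action of the lower-triangular block of $K_p(m_\pi)$ on the Kirillov model is the harder task. My approach would be to invoke the local functional equation: the Weyl element $w$ sends $\xi_\chi^{(m)}$ to a scalar multiple of $\xi_{\chi^{-1}\omega_{\pi_p}^{-1}}^{(m')}$ for an $m'$ governed by the conductor of $\pi_p \otimes \chi^{-1}$, the scalar being an appropriate local epsilon factor. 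Twist-minimality of $\pi_p$ forces $a(\pi_p \otimes \chi^{-1}) \geq n_\pi$ for every $\chi$, and the resulting compatibility between support levels under $w$ will single out exactly the candidate set above. An alternative and perhaps cleaner route is to work inside the compact induction model of $\pi_p$ supplied by the classification lemma and to compute Whittaker periods of explicit type vectors; the two approaches should yield the same basis.

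Granted such a basis, the final step is immediate. Since each $\xi_\chi^{(m)}$ has Whittaker norm one and $|\xi_\chi^{(m)}(y)|^2 = \mathbbm{1}_{p^{-m}\Z_p^\times}(y)$, summing over the basis gives
\begin{equation}
\mathcal{S}_{\pi_p}(a(y)) = \sharp\mathfrak{X}_{m_\pi}\cdot\mathbbm{1}_{p^{-m_\pi}\Z_p^\times}(y) + \delta_{n_\pi = 2m_\pi-1}\cdot \sharp\mathfrak{X}_{m_\pi-1}\cdot\mathbbm{1}_{p^{1-m_\pi}\Z_p^\times}(y).\nonumber
\end{equation}
Using $\sharp\mathfrak{X}_k = p^{k-1}(p-1) = p^k/\zeta_p(1)$ then recovers the two explicit identities. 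The uniform bound $\mathcal{S}_{\pi_p}(a(p^{-m_\pi}y)) \ll d\abs{y}_p\delta_{y\in\Z_p}$ follows by inspection, since after the substitution each surviving term contributes at most $d$ on a set where $\abs{y}_p = 1$.
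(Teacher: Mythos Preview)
Your proposal is correct and follows the same route as the paper: identify the orthonormal basis $\{\xi_\chi^{(m)}\}$ in the Kirillov model and then sum $|\xi_\chi^{(m)}(y)|^2$. The only difference is that where you sketch an argument (via the local functional equation or the compact-induction model) for why exactly these $\xi_\chi^{(m)}$ span $\pi_p^{K_p(m_\pi)}$, the paper simply quotes \cite[Lemma~4.4]{miyauchi-yamauchi_remark} for this fact and moves on; your functional-equation outline is in fact the mechanism underlying that reference.
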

\begin{proof}
We start with the case $n_{\pi} = 2m_{\pi}$. By \cite[Lemma~4.4]{miyauchi-yamauchi_remark} we find that a basis for $\pi^{K_p(m_{\pi})}$ in the Kirillov model is given by
\begin{equation}
	\{ \xi_{\chi}^{(m_{\pi})}\colon \chi\in \mathfrak{X}_{m_{\pi}}\}.\nonumber
\end{equation}
Note that we already took advantage of twist-minimality using that $n_{\chi\pi} =n_{\pi}$ for all $\chi\in \mathfrak{X}_{m_{\pi}}$. Our computations above show that this basis is orthonormal (with respect to the Whittaker inner product). Thus we have
\begin{equation}
	\mathcal{S}_{\pi_p}(y) = \sum_{\chi\in \mathfrak{X}_{m_{\pi}}} \abs{\xi_{\chi}^{(m_{\pi})}(y)}^2 = \delta_{y\in p^{-m_{\pi}}\Z_p^{\times}}\cdot \sharp \mathfrak{X}_{m_{\pi}}.\nonumber
\end{equation}

We turn towards the second case where $n_{\pi}$ is odd. Then we get the orthonormal basis
\begin{equation}
	\{\xi_{\chi}^{m_{\pi}}\colon \chi \in \mathfrak{X}_{m_{\pi}}\} \cup \{\xi_{\chi}^{m_{\pi}-1}\colon \chi \in \mathfrak{X}_{m_{\pi}-1}\}.\nonumber
\end{equation}
It is again easy to compute the desired quantity:
\begin{equation}
	\mathcal{S}_{\pi_p}(y) = \delta_{y\in p^{-m_{\pi}}\Z_p^{\times}}\cdot \sharp \mathfrak{X}_{m_{\pi}}+\delta_{y\in p^{1-m_{\pi}}\Z_p^{\times}}\cdot \sharp \mathfrak{X}_{m_{\pi}-1}.\nonumber
\end{equation}
The result follows directly.
\end{proof}

\subsection{Conclusion}

We can now give a decent bound for $\Phi(z)$ using the Whittaker expansion. We will use the bound \eqref{eq:local_bound} and follow the standard procedure.

\begin{lemmy}\label{lm:whitt_bound}
We have
\begin{equation}
	\Phi(z) \ll \left(\frac{dT}{y}\right)^{\epsilon}\left( d^{\frac{1}{2}}T^{\frac{1}{6}}+\frac{dT^{\frac{1}{2}}}{y^{\frac{1}{2}}}\right).\nonumber
\end{equation}
\end{lemmy}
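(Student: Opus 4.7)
The strategy is to insert the local estimate \eqref{eq:local_bound} into Lemma~\ref{lm:first_fourier_red} and control the resulting archimedean Whittaker sum via the classical uniform asymptotics of $K_{it_{\pi}}$. Since $(n,p)=1$ and $k\geq 0$ force $\abs{np^{k}}_{p}=p^{-k}$, \eqref{eq:local_bound} gives $\mathcal{S}_{\pi_{p}}(a(np^{k-m_{\pi}}))^{\frac{1}{2}}\ll d^{\frac{1}{2}+\epsilon}p^{-k/2}$. Feeding this into Lemma~\ref{lm:first_fourier_red} reduces matters to proving the archimedean estimate
\[ S(Y):=\sum_{\substack{n\neq 0\\(n,p)=1}}\frac{\abs{\lambda_{\pi}(n)}}{\sqrt{\abs{n}}}\,\abss{W_{\infty}(a(nY))}\ll T^{\epsilon}\Bigl(T^{\tfrac{1}{6}}+\sqrt{T/Y}\Bigr) \]
valid for every $Y>0$, then applying it at $Y=Y_{k}:=p^{k-m_{\pi}}y$ and summing against the geometric weight $p^{-k/2}$.

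For the Whittaker sum itself I would use the three standard regimes of the normalised spherical Whittaker function: $|W_{\infty}(a(u))|\ll T^{\epsilon}(u/T)^{\frac{1}{2}}$ in the oscillatory range $u\leq T/(2\pi)-T^{\frac{1}{3}}$, $|W_{\infty}(a(u))|\ll T^{\frac{1}{6}+\epsilon}$ inside the Airy window $\abs{2\pi u-T}\leq T^{\frac{1}{3}}$, and exponential decay for $u\gg T$. Combined with the Rankin--Selberg mean value $\sum_{n\leq X}\abs{\lambda_{\pi}(n)}\ll X^{1+\epsilon}$, the oscillatory piece contributes $\sqrt{Y/T}\cdot(T/Y)^{1+\epsilon}=(T/Y)^{\frac{1}{2}+\epsilon}$; the Airy band has width $\ll T^{\frac{1}{3}}/Y$ in the $n$-variable and contributes at most $T^{\frac{1}{6}+\epsilon}\sqrt{Y/T}\,(1+T^{\frac{1}{3}}/Y)^{1+\epsilon}$, which a short check bounds by $T^{\frac{1}{6}+\epsilon}+(T/Y)^{\frac{1}{2}+\epsilon}$; the decaying tail is negligible.

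The assembly is then immediate. Writing $Y_{k}=p^{k}Y_{0}$ with $Y_{0}=p^{-m_{\pi}}y$, the geometric weights render both terms summable: $\sum_{k\geq 0}p^{-k/2}T^{\frac{1}{6}}\ll T^{\frac{1}{6}}$ and $\sum_{k\geq 0}p^{-k/2}\sqrt{T/Y_{k}}\ll\sqrt{T/Y_{0}}=p^{m_{\pi}/2}\sqrt{T/y}\ll d^{\frac{1}{2}}\sqrt{T/y}$, the last step using $d\asymp p^{m_{\pi}}$ from Lemma~\ref{lm:irred}. Multiplying through by the outer $d^{\frac{1}{2}}$ prefactor supplied by \eqref{eq:local_bound} produces
\[ \Phi(z)\ll\left(\tfrac{dT}{y}\right)^{\epsilon}\Bigl(d^{\tfrac{1}{2}}T^{\tfrac{1}{6}}+dT^{\tfrac{1}{2}}/\sqrt{y}\Bigr), \]
as claimed.

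\noindent\textbf{Main obstacle.} The delicate input is the Airy transition: the naive oscillatory bound $(u/T)^{\frac{1}{2}}$ overshoots near the turning point $u=T/(2\pi)$, so one genuinely needs the sharper asymptotic $|K_{it_{\pi}}(T)|\ll e^{-\pi T/2}T^{-\frac{1}{3}}$ to extract the favourable $T^{\frac{1}{6}}$ exponent in the first term; once this is in place, everything else reduces to routine Rankin--Selberg bookkeeping and geometric summation in $k$.
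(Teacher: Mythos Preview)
Your proposal is correct and follows the same route as the paper: insert \eqref{eq:local_bound} into Lemma~\ref{lm:first_fourier_red} and then bound the resulting archimedean Whittaker sum by the standard $T^{1/6}+(T/Y)^{1/2}$ estimate. The only cosmetic difference is that the paper collapses the double sum over $(n,k)$ into a single sum over all nonzero integers $m=np^{k}$ (so that $p^{-k/2}/\sqrt{|n|}=1/\sqrt{|m|}$ and $\lambda_{\pi}(n)=\lambda_{\pi}(m/(m,p^{\infty}))$) and then cites \cite{templier_hybrid,saha} for the archimedean sum, whereas you keep $k$ explicit and sum the geometric weights at the end; the two computations are formally identical.
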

\begin{proof}
Inserting \eqref{eq:local_bound} into Lemma~\ref{lm:first_fourier_red} yields
\begin{equation}
	\Phi(g)\ll d^{\frac{1}{2}+\epsilon}T^{\epsilon}\sum_{0 \neq n\in\Z}\frac{\abs{\lambda_{\pi}\left(n/(n,p^{\infty})\right)}}{\sqrt{\abs{n}}} \abs{W_{\infty}\left(n\frac{y}{p^{m_{\pi}}}\right)}.\nonumber
\end{equation}
Estimating the remaining $n$-sum as for example in \cite{templier_hybrid} or \cite{saha} yields the desired result.
\end{proof}

\section{A bound via the pre-trace formula}

The next bound will be derived from the pre-trace inequality. We start by discussing the local test functions. At the archimedean place we closely follow \cite[Section~3.5]{saha} and fix $f_{\infty}$ so that it satisfies
\begin{enumerate}
	\item $f_{\infty}(g) = 0$ unless $g\in G(\R)^+$ and $u(g)\leq 1$;
	\item $\hat{f}_{\infty}(\sigma)>0$ for all irreducible spherical unitary principal series representations $\sigma$ of $G(\R)$;
	\item $\hat{f}(\pi_{\infty})\gg 1$;
	\item $\abs{f_{\infty}(g)}\leq T$ and if $u(g)\geq T^{-2}$, then $\abs{f_{\infty}(g)}\leq T^{\frac{1}{2}}u(g)^{-\frac{1}{4}}$.
\end{enumerate}
(The final property is not really necessary because we are ignoring the spectral aspect for now.) Note that $\hat{f}$ is the spherical transform (also Selberg/Harish-Chandra transform) of $f$ and $u(g)$ is the point-pair invariant on group level. 

At the place $v=p$ we define multiple test functions:
\begin{equation}
	f_p^{(i)}(g) = \mathbbm{1}_{ZK}(g) \frac{\overline{\langle \pi(g)\phi_p^{(i)},\phi_p^{(i)}\rangle_{\pi_p}}}{\langle \phi_p^{(i)},\phi_p^{(i)}\rangle_{\pi_p}}.\nonumber
\end{equation}

\begin{lemmy}
For every irreducible admissible unitary representation $\sigma$ of $G(\Q_p)$ the operator $\sigma(f_p^{(i)})$ is non-negative and self-adjoint. Furthermore we have $\pi_p(f_p^{(i)})\phi_p^{(i)} = \dim_{\C}(\pi_p^{K_p(m_{\pi})})^{-1}\cdot \phi_p^{(i)}$.
\end{lemmy}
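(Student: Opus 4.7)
The plan is to recognise $f_p^{(i)}$ as (the complex conjugate of) a diagonal matrix coefficient of $\pi_p$ truncated to $ZK_p$, and then to apply Schur orthogonality on the finite-dimensional irreducible $K_p$-module $V = \pi_p^{K_p(m_{\pi})}$; irreducibility of $V$ is exactly Lemma~\ref{lm:irred}. Writing $\phi=\phi_p^{(i)}$ and $c(g) = \langle \pi_p(g)\phi, \phi\rangle/\langle \phi, \phi\rangle_{\pi_p}$, we have $f_p^{(i)}(g) = \mathbbm{1}_{ZK_p}(g)\overline{c(g)}$, and on $ZK_p$ the matrix coefficient factors as $c(zk)=\omega_{\pi_p}(z)\cdot c(k)$, where $c|_{K_p}$ is a diagonal matrix coefficient of the irreducible $K_p$-representation $V$.

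For self-adjointness I would use the identity $\sigma(f)^{\ast}=\sigma(f^{\ast})$ with $f^{\ast}(g)=\overline{f(g^{-1})}$, so it suffices to verify $f_p^{(i)}=(f_p^{(i)})^{\ast}$. Symmetry of the subgroup $ZK_p$ and unitarity of $\pi_p$ (giving $\overline{c(g^{-1})}=c(g)$) together deliver this at once. For non-negativity I would evaluate $\langle \sigma(f_p^{(i)})v,v\rangle$ by factoring $g=zk$ with $z\in Z(\Q_p)$ and $k\in K_p$. The integration in $z$ reduces to a pairing of the central characters of $\sigma$ and $\pi_p$; the operator $\sigma(f_p^{(i)})$ vanishes outright unless $\omega_\sigma = \omega_{\pi_p}$, and in that case one is left with a positive multiple of $\int_{K_p}\overline{c(k)}\,\langle \sigma(k)v,v\rangle\, dk$. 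Decomposing $\sigma|_{K_p}$ into $K_p$-isotypic pieces and applying Schur orthogonality to the irrep $V$ of $K_p$, this integral is readily identified as $\tfrac{1}{d}\|Pv\|^2$, where $P$ is the orthogonal projection of $v$ onto the collection of $\phi$-lines across all copies of $V$ inside $\sigma|_{K_p}$. Non-negativity (and self-adjointness, independently) follows from this expression.

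The projection identity is the special case $\sigma=\pi_p$, $v=\phi_p^{(i)}$: here $\phi$ lies in the canonical copy $V\subset \pi_p$ and $P\phi=\phi$, so the Schur formula above gives $\pi_p(f_p^{(i)})\phi_p^{(i)} = d^{-1}\phi_p^{(i)}$ with $d=\dim_{\C}V$. The main obstacle is organisational rather than conceptual: one must fix the Haar measure normalisations on $Z(\Q_p)$, $K_p$ and $Z(\Q_p)\cap K_p$ compatibly, including the convention used to make sense of $\sigma(f_p^{(i)})$ at all when $\omega_\sigma=\omega_{\pi_p}$ (since the centre is non-compact), so that the prefactors collapse to exactly $1/d$. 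Once these conventions are pinned down the Schur orthogonality computation on the compact group $K_p$ is clean and routine.
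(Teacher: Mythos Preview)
Your argument is correct and rests on the same pillar as the paper's, namely Schur orthogonality for the irreducible $K_p$-module $V=\pi_p^{K_p(m_{\pi})}$ (Lemma~\ref{lm:irred}); the self-adjointness and eigenvalue computations are essentially identical to the paper's. The one genuine organisational difference is in the non-negativity step: the paper does not evaluate $\langle \sigma(f_p^{(i)})v,v\rangle$ inside each $\sigma$, but instead proves the convolution identity $f_p^{(i)} = d\cdot (f_p^{(i)}\star f_p^{(i)})$ directly via Schur orthogonality on $K_p$, so that $\sigma(f_p^{(i)}) = d\,\sigma(f_p^{(i)})^{\ast}\sigma(f_p^{(i)})\ge 0$ for every $\sigma$ at once. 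This bypasses your decomposition of $\sigma\vert_{K_p}$ and the central-character case split entirely, and it sidesteps the measure bookkeeping you flag as the main nuisance, since the convolution lives on $Z\backslash G$ and reduces immediately to a $K_p$-integral. Your route, on the other hand, yields the finer information that $\sigma(f_p^{(i)})$ is (up to the scalar $d^{-1}$) a projection onto the ``$\phi$-lines'' in the $V$-isotypic part of $\sigma$, which is more than the paper extracts.
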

\begin{proof}
The proof is standard and relies on Schur's orthogonality relations for irreducible representations of $K_p$. To apply this it will be important to keep in mind that $\pi^{K_{p}(m_{\pi})}$ is irreducible.

The operators are self-adjoint since $f_p^{(i)}(g^{-1})=\overline{f_p^{(i)}(g)}$. To see non-negativity we will show the convolution identity
\begin{equation}
	f_p^{(i)} = \dim_{\C}\pi^{K_{p,1}(m_{\pi})} \cdot (f_p^{(i)}\star f_p^{(i)}).\nonumber
\end{equation}
Indeed we compute
\begin{align}
	[f_p^{(i)}\star f_p^{(i)}](h) &=\int_{Z\backslash G} f_p^{(i)}(g^{-1})f_p^{(i)}(gh)dg \nonumber \\
	&= \frac{\mathbbm{1}_{ZK}(h)}{{\langle \phi_p^{(i)},\phi_p^{(i)}\rangle_{\pi_p}}^2} \int_{K_p}\langle \pi_p(g)\phi_p^{(i)},\phi_p^{(i)}\rangle_{\pi_p}\overline{\langle \pi_p(gh)\phi_p^{(i)},\phi_p^{(i)}\rangle_{\pi_p}}dk\nonumber  \\
	&= \dim_{\C}(\pi^{K_{p}(m_{\pi})})^{-1}\frac{\mathbbm{1}_{ZK}(h)}{\langle \phi_p^{(i)},\phi_p^{(i)}\rangle_{\pi_p}}\overline{\langle \pi_p(h)\phi_p^{(i)},\phi_p^{(i)}\rangle_{\pi_p}}\nonumber
\end{align}
and the claimed identity follows directly.

It remains to show the final claim. First observe that the image of $\pi_p(f_p^{(i)})\phi_p^{(i)}$ is obviously $K_p(m_{\pi})$-invariant. Thus it suffices to show that $$\langle\pi_p(f_p^{(i)})\phi_p^{(i)},w\rangle_{\pi_p} = \dim_{\C}(\pi_p^{K_p(m_{\pi})})^{-1}\cdot\langle\phi_p^{(i)},w\rangle_{\pi_p}$$ for any $w\in \pi_p^{K_p(m_{\pi})}$ But this follows again from the orthogonality relations since $\pi_p^{K_p(m_{\pi})}$ is irreducible (as $K_p$-module) and
\begin{equation}
	\langle\pi_p(f_p^{(i)})\phi_p^{(i)},w\rangle_{\pi_p} = \langle \int_{Z\backslash G} f_p^{(i)}(g)\pi_p(g)\phi_p^{(i)}dg,w\rangle =\int_{Z\backslash G}f_p^{(i)}(g)\langle \pi_p(g)\phi_p^{(i)},w\rangle_{\pi_p}dg.\nonumber
\end{equation}
\end{proof}

Finally we define the unramified part of the test function $f_{ur}$ by setting 
\begin{multline}
	f_{ur} = \left( \sum_{l\in S} c_l \kappa_l\right)\star \left( \sum_{l\in S} c_l \kappa_l \right)^*+\left( \sum_{l\in S} c_{l^2} \kappa_{l^2}\right)\star \left( \sum_{l\in S} c_{l^2} \kappa_{l^2} \right)^*, \\ \text{ for } c_r=\begin{cases} \frac{\abs{\lambda_{\pi}(r)}}{\lambda_{\pi}(r)} &\text{ if $r=l$ or $r=l^2$ for $l\in S$},\\ 0&\text{ else.}\end{cases} \nonumber
\end{multline}
for a set of primes $S$ (to be determined) and normalised $r$th Hecke-operators $\kappa_r$. This implements the usual amplification procedure. Finally we define the global test functions
\begin{equation}
	f^{(i)} = f_{\infty}\otimes f_p^{(i)}\otimes f_{ur} \text{ and } f=\sum_i f^{(i)}.\nonumber
\end{equation}

We introduce
\begin{equation}
	M(l, g) = \{A\in M_2(\Z) \colon \det(A)=l,\, A\equiv g \text{ mod }p^{m_{\pi}} \} \text{ for }g\in \GL_2(\Z/p^{m_{\pi}}\Z).\nonumber
\end{equation}
Further let $\sigma$ denote the irreducible representation of $\GL_2(\Z/p^{m_{\pi}}\Z)$ through which the irreducible $K_p$-module $\pi^{K_p(m_{\pi})}$ factors. This is a representation of a finite group and we write $\chi_{\sigma}$ for its character. Finally we define the coefficients $y_r$ by linearising the convolutions of Hecke-operators in the definition of $f_{ur}$. More precisely we write
\begin{equation}
	f_{ur} = \sum_{r} y_r \kappa_r.\nonumber
\end{equation}
This can be compared to the analogous expression in \cite[Section~7]{saha}.

The following pre-trace inequality provides the transition to the counting problem.

\begin{lemmy}
For $z\in \mathcal{F}$ we have
\begin{equation}
	\frac{(\sharp S)^2}{\dim_{\C}\pi_p^{K_p(m_{\pi})}} \cdot \Phi(z)^2 \ll  \sum_{r} \frac{\abs{y_r}}{\sqrt{r}} \sum_{g\in \GL_2(\Z/p^{m_{\pi}}\Z)} \abs{\chi_{\sigma}(g)} \sum_{A\in M(l,g)} \abs{f_{\infty}(u(Az,z))}.\nonumber
\end{equation}
\end{lemmy}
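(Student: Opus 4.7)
The plan is to apply the pre-trace formula to $f=\sum_i f^{(i)}$, compare the spectral and geometric sides, and observe that summing the local matrix-coefficient test functions $f_p^{(i)}$ over $i$ collapses to the character $\chi_\sigma$.

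Spectral side. The cuspidal automorphic kernel admits the spectral expansion
$$K_f(z,z)=\sum_{\phi}(R(f)\phi)(z)\,\overline{\phi(z)},$$
with $\phi$ running over an orthonormal basis of the cuspidal spectrum. A key observation is that $R(f^{(i)})$ is non-negative self-adjoint on every unitary constituent: at $\infty$ this is property (2) of $f_\infty$; at $p$ this follows from the preceding lemma (the projector-type identity $f_p^{(i)}\star f_p^{(i)}=d^{-1}f_p^{(i)}$); at the unramified places it is visible from the definition of $f_{ur}$ as a sum of convolutions $\varphi\star\varphi^{\ast}$. Summing over $i$ preserves non-negativity, so we may drop all spectral contributions except those of the $\phi_j$'s. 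Using Flath factorisation, one checks that $\pi_p(f_p^{(i)})\phi_p^{(j)}=d^{-1}\delta_{ij}\phi_p^{(i)}$ (the same Schur-orthogonality as in the previous lemma), while $\pi_\infty(f_\infty)$ and $\pi(f_{ur})$ act on each $\phi_j$ by the scalars $\hat f_\infty(\pi_\infty)\gg 1$ and $\hat f_{ur}(\pi)\gg(\sharp S)^2$ respectively, the latter being the standard amplifier estimate via the Hecke relation $\lambda_\pi(l)^2-\lambda_\pi(l^2)=1$. Summing the $\phi_j$-contributions,
$$K_f(z,z)\gg \frac{(\sharp S)^2}{d}\,\Phi(z)^2.$$

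Geometric side. On the other hand,
$$K_f(z,z)=\sum_{\gamma\in Z(\Q)\backslash G(\Q)}f(z^{-1}\gamma z)\leq \sum_{\gamma}\sum_i \abs{f^{(i)}(z^{-1}\gamma z)}.$$
Summing the $p$-adic factors over $i$ using the identity $\sum_i\langle\pi_p(g)\phi_p^{(i)},\phi_p^{(i)}\rangle_{\pi_p}=\tr\sigma(\bar g)=\chi_\sigma(\bar g)$, where $\bar g$ denotes the reduction of $g\in K_p$ in $\GL_2(\Z/p^{m_{\pi}}\Z)$, gives
$$\sum_i \abs{f_p^{(i)}(g)}\leq \mathbbm{1}_{ZK_p}(g)\,\abs{\chi_\sigma(\bar g)}.$$
Expanding $f_{ur}=\sum_r y_r\kappa_r$ and using that the finite-adelic kernel of the normalised Hecke operator $\kappa_r$ distributes with weight $r^{-1/2}$ over integer matrices $A\in M_2(\Z)$ of determinant $r$, one obtains
$$K_f(z,z)\ll \sum_r\frac{\abs{y_r}}{\sqrt r}\sum_{\substack{A\in M_2(\Z)\\ \det A=r}}\abs{f_\infty(u(Az,z))}\,\abs{\chi_\sigma(\bar A)}.$$
Partitioning the $A$-sum according to $g=\bar A\in \GL_2(\Z/p^{m_{\pi}}\Z)$ gives the stated inequality.

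The main technical step is the verification of local non-negativity of each $R(f_v^{(i)})$ with the precise normalising constants (so that the passage from the spectral equality to the lower bound $(\sharp S)^2 d^{-1}\Phi(z)^2$ loses no factors), combined with careful tracking of the $r^{-1/2}$ Hecke normalisation when converting the adelic action of $\kappa_r$ into a classical matrix sum. Both pieces are standard, but they are what makes the character $\chi_\sigma$ appear cleanly on the geometric side and identifies the analytic problem to be handled in the subsequent sections.
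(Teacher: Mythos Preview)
Your overall strategy coincides with the paper's: exploit positivity of each $R(f^{(i)})$ on the spectral side to isolate the $\phi_j$'s, and on the geometric side recognise that the $p$-adic test functions sum to the character $\chi_\sigma$. The paper carries this out by applying the pre-trace inequality to each $f^{(i)}$ separately (dropping all terms except $\phi_i$, which only needs the diagonal action $\pi_p(f_p^{(i)})\phi_p^{(i)}=d^{-1}\phi_p^{(i)}$ from the preceding lemma), then summing the resulting inequalities over $i$; you instead sum first and apply the pre-trace inequality to $f$, which additionally requires the off-diagonal vanishing $\pi_p(f_p^{(i)})\phi_p^{(j)}=0$ for $i\neq j$. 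That is fine (it follows from the same Schur relations), so the spectral side is handled correctly either way.

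There is, however, a genuine slip on the geometric side. You write
\[
K_f(z,z)\leq \sum_{\gamma}\sum_i \abs{f^{(i)}(z^{-1}\gamma z)}
\quad\text{and then}\quad
\sum_i \abs{f_p^{(i)}(g)}\leq \mathbbm{1}_{ZK_p}(g)\,\abs{\chi_\sigma(\bar g)}.
\]
The second inequality is the wrong direction of the triangle inequality: the identity $\sum_i f_p^{(i)}(g)=\chi_\sigma(\bar g)$ only gives $\abs{\chi_\sigma(\bar g)}\leq \sum_i\abs{f_p^{(i)}(g)}$, and strict inequality does occur (any $g$ with $\chi_\sigma(\bar g)=0$ but nonzero diagonal matrix coefficients furnishes a counterexample). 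As written, the chain of inequalities breaks.

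The fix is immediate: do not insert absolute values term by term. Since $f=\sum_i f^{(i)}=f_\infty\otimes\bigl(\sum_i f_p^{(i)}\bigr)\otimes f_{ur}$ is itself a pure tensor, one has directly
\[
K_f(z,z)=\sum_{\gamma} f_\infty(z^{-1}\gamma z)\,\Bigl(\sum_i f_p^{(i)}(\gamma)\Bigr)\,f_{ur}(\gamma)
=\sum_{\gamma} f_\infty(z^{-1}\gamma z)\,\chi_\sigma(\bar\gamma)\,f_{ur}(\gamma),
\]
and now taking absolute values gives the claimed bound. This is exactly how the paper proceeds: it sums the per-$i$ geometric sides \emph{before} any estimation, so that $\sum_i f_p^{(i)}(\gamma)=\chi_\sigma(\bar\gamma)$ appears intact, and only then applies the triangle inequality.
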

\begin{proof}
We start by considering the spectral expansion of the automorphic kernel $k_{f^{(i)}}$ associated to the self-adjoint operators $R(f^{(i)})$ and dropping all terms except $\phi_i$. The latter is possible by positivity. We obtain
\begin{equation}
	\frac{(\sharp S)^2}{\dim_{\C}\pi_p^{K_p(m_{\pi})}} \cdot \phi_i(g_{\infty})^2 \leq k_{f^{(i)}}(g,g) = \sum_{r}y_r \sum_{\gamma\in Z(\Q)\backslash G(\Q)} f_p^{(i)}(\gamma)\kappa_r(\gamma)f_{\infty}(g_{\infty}^{-1}\gamma g_{\infty}).\nonumber
\end{equation}
We now sum this inequality over $i$ to obtain
\begin{equation}
	\frac{(\sharp S)^2}{\dim_{\C}\pi_p^{K_p(m_{\pi})}}\Phi(g_{\infty})^2 \leq \sum_r y_r \sum_{\gamma\in Z(\Q)\backslash G(\Q)} \left(\sum_{i=1}^d f_p^{(i)}(\gamma)\right) \kappa_r(\gamma)f_{\infty}(g_{\infty}^{-1}\gamma g_{\infty}).\nonumber
\end{equation}
Now we write $\overline{\gamma}$ for the image of $\gamma$ in $\GL_2(\Z/p^{m_{\pi}}\Z)$. Note that this is well defined as long as $\gamma\in K_p$. For such $\gamma$ we get
\begin{equation}
	\sum_{i=1}^d f_p^{(i)}(\gamma) = \chi_{\sigma}(\overline{\gamma}).\nonumber
\end{equation}
The rest of the argument is standard and can for example be found in \cite{saha}.
\end{proof}

By the choice of $f_{\infty}$ we can already eliminate the archimedean  influence from the right hand side. (Note that we are not aiming to amplify in the $T$-aspect.)

\begin{cor}\label{pre-trace_inequality}
For $z\in \mathcal{F}$ we have
\begin{equation}
	\frac{(\sharp S)^2}{\dim_{\C}\pi_p^{K_p(m_{\pi})}} \cdot \Phi(z)^2 \ll  T \sum_{g\in \GL_2(\Z/p^{m_{\pi}}\Z)} \abs{\chi_{\sigma}(g)} \sum_{r} \frac{\abs{y_r}}{\sqrt{r}}  \cdot \sharp M_z(r,g).\nonumber
\end{equation}
for
\begin{equation}
	M_z(r, g) = \{A\in M_2(\Z) \colon \det(A)=r,\, A\equiv g \text{ mod }p^{m_{\pi}} \text{ and }u(Az,z)\leq 1 \}.\nonumber
\end{equation}
\end{cor}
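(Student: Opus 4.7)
The proof of the corollary is a direct consequence of the preceding lemma combined with the explicit properties of $f_{\infty}$. My plan is as follows.

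First I would start from the conclusion of the previous lemma, namely
\[
\frac{(\sharp S)^2}{\dim_{\C}\pi_p^{K_p(m_{\pi})}} \cdot \Phi(z)^2 \ll \sum_{r} \frac{\abs{y_r}}{\sqrt{r}} \sum_{g\in \GL_2(\Z/p^{m_{\pi}}\Z)} \abs{\chi_{\sigma}(g)} \sum_{A\in M(r,g)} \abs{f_{\infty}(u(Az,z))}.
\]
The whole task is to bound the innermost sum, where all dependence on $f_\infty$ sits. The right-hand side is a sum over the discrete set of integer matrices $A$ with a fixed determinant and fixed residue class mod $p^{m_\pi}$, evaluated at real-analytic points $u(Az,z)$. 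Since $f_\infty$ has been chosen with a specific analytic profile, one only needs the crudest of its properties here.

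Next I would invoke the two properties of $f_{\infty}$ that matter: property (1), which guarantees that $f_{\infty}(h)=0$ unless $h\in G(\R)^+$ and $u(h)\le 1$, and property (4), which yields the trivial pointwise bound $\abs{f_{\infty}(h)}\le T$. Applying (4) gives
\[
\abs{f_{\infty}(u(Az,z))}\le T
\]
for every $A$, while property (1) ensures that the summand vanishes unless $u(Az,z)\le 1$. Since $z\in\mathcal F$ lies in the upper half-plane and $A\in M_2(\Z)$ has positive determinant $r$, the condition $A\in G(\R)^+$ is automatic, so the support restriction is entirely captured by the inequality $u(Az,z)\le 1$ built into the definition of $M_z(r,g)$.

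Combining these two observations yields
\[
\sum_{A\in M(r,g)} \abs{f_{\infty}(u(Az,z))} \le T\cdot \sharp M_z(r,g),
\]
which, inserted into the pre-trace inequality from the lemma, produces exactly the stated bound. There is no genuine obstacle: the only subtlety is checking that the positivity condition $A\in G(\R)^+$ is automatic for integer matrices of positive determinant and that the support/size properties of $f_\infty$ suffice to extract the factor $T$ without any further archimedean input (the finer bound in property (4), involving $u(g)^{-1/4}$, is reserved for a more delicate analysis in the spectral aspect and is not needed here).
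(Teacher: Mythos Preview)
Your proposal is correct and matches the paper's own argument, which simply remarks that ``by the choice of $f_{\infty}$ we can already eliminate the archimedean influence from the right hand side.'' You have made explicit precisely the two properties of $f_\infty$ (support in $u\le 1$ and the pointwise bound $\abs{f_\infty}\le T$) that the paper invokes implicitly.
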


This last corollary tells us that we need to control the character $\chi_{\sigma}$ and solve a counting problem estimating $M_z(r,g)$.

To estimate the character we need to define certain level sets.
\begin{equation}
	K_{m,\lambda} = \{g\in G(\Z/p^m\Z)\colon g\equiv 1 \text{ mod }p^{\lambda} \} \text{ for } 0\leq \lambda \leq m.\nonumber
\end{equation}
Note that $K_{m,m} = \{1\}$ and $K_{m,0} = G(\Z/p^m\Z)$. 

\begin{lemmy}\label{character}
Suppose $p>3$. Let $\pi$ belong to Case~1, ~2 or ~3.1. Then, for $0\leq \lambda < m_{\pi}$ and $g\in Z\cdot K_{m_{\pi},\lambda}\setminus Z\cdot K_{m_{\pi},\lambda+1}$  we have
\begin{equation}
	\chi_{\sigma}(g) \ll p^{\lambda}.
\end{equation}
If $\pi$ belongs to Case~3.2 and $\lambda$ and $g$ are as above, then we have the slightly weaker bound	
\begin{equation}
	\chi_{\sigma}(g) \ll p^{\frac{m_{\pi}+\lambda}{2}} .\nonumber
\end{equation}
Furthermore, let $h\in G(\Z/p^m\Z)$ be a diagonal matrix such that $G(\Z/p^m\Z)=Z\cdot \SL_2(\Z/p^m\Z)\sqcup  hZ\cdot \SL_2(\Z/p^m\Z)$. Then the same estimates hold for $hZ \cdot K_{m_{\pi},\lambda}\setminus hZ\cdot K_{m_{\pi},\lambda+1}$, where $\det(h)$ is not a square modulo $p^{m_{\pi}}$.
\end{lemmy}
The representations of $\GL_2$ over finite rings such as $\Z/p^m\Z$ and their characters are well studied but explicit estimates for the characters as needed here seem to be hard to find. We choose to use the character tables for $\SL_2(\Z/p^m\Z)$ computed by Kutzko in his PhD thesis. This makes it necessary to pass from $\SL_2$ to $\GL_2$ using Mackey Theory. Note that the character values in question were calculated in \cite{leigh_cliff_wen}. However, they remain hard to extract and we hope our approach is more transparent.

\begin{proof}
Note that if $m_{\pi}=1$, then $\sigma$ is the character of $\GL_2(\Z/p\Z)$. If we are in Case~1 or ~2, then the representation is constructed by parabolic induction and the character is easily computed. Otherwise we must be in Case~3.1 in which case $\sigma$ is cuspidal. In this case the character values are well known, see for example \cite{green_char}. (Alternatively one can use Mackey Theory to reduce to the case of characters for $\SL_2(\Z/p\Z)$ and use the corresponding character table given in \cite[p. 128]{schur}.)
	
We will now assume $m_{\pi}>1$. Since Case~2 can not occur we treat Case~1 and ~3.1, leaving Case~3.2 for later. Our approach is based on reduction to the case of characters for $\SL_2(\Z/p^m\Z)$ using Mackey-Theory. Recall that we are assuming $p$ to be odd. Let $\omega_{\sigma}$ be the central character of $\sigma$ and $\tilde{\sigma}$ be an irreducible component of $\sigma\vert_{\SL_2(\Z)}$. Fix $h$ such that $\GL_2(\Z/p^m\Z) = Z\SL_2(\Z/p^m\Z)\cup h\cdot Z\SL_2(\Z/p^m\Z)$ and write $\tilde{\sigma}^h(g) = \tilde{\sigma}(hgh^{-1})$. If $\tilde{\sigma}\not\cong\tilde{\sigma}^h$, then
\begin{equation}
	\sigma  = \Ind_{Z\cdot \SL_2(\Z/p^m\Z)}^{\GL_2(\Z/p^m\Z)}(\omega_{\sigma}\cdot \tilde{\sigma}).\nonumber
\end{equation} 
Otherwise, if $\tilde{\sigma}\cong\tilde{\sigma}^h$, then
\begin{equation}
	\sigma\oplus \sigma'  = \Ind_{Z\cdot \SL_2(\Z/p^m\Z)}^{\GL_2(\Z/p^m\Z)}(\omega_{\sigma}\cdot \tilde{\sigma}),\label{eq:seecond_case}
\end{equation} 
where $\sigma'$ is another irreducible representation of $\GL_2(\Z/p^m_{\pi}\Z)$. Now we observe  that the dimensions of irreducible representations of $\SL_2(\Z/p^{m_{\pi}}\Z)$ are given by $p^{m_{\pi}}(1+p^{-1})$, $p^{m_{\pi}}(1-p^{-1})$ and $\frac{1}{2}p^{m_{\pi}}(1-p^{-2})$. Thus by recalling that $\sigma$ has dimension $p^{m_{\pi}}(1\pm p^{-1})$ (in Cases~1 and ~3.1) we find that we must be in the situation described in \eqref{eq:seecond_case}. Moreover, this means that the restriction of $\sigma$ to $Z\cdot \SL_2(\Z/p\Z)$ is irreducible and equivalent to $\omega_{\sigma\cdot\tilde{\sigma}}$. In particular the character $\chi_{\sigma}$ is given by $\chi_{\sigma}(z\cdot s)=\omega_{\sigma}(z)\chi_{\tilde{\sigma}}(s)$ for $z\in Z$ and $s\in \SL_2(\Z/p^m\Z)$. We conclude by referring to \cite[Table~III]{kutzko} where the character values of characters of dimensions $p^{m_{\pi}}(1\pm p^{-1})$ are listed. 

We now turn towards Case~3.2. Note that this case is exceptional in the sense that the restriction of $\sigma$ to $\SL_2(\Z/p^{m_{\pi}}\Z)$ is reducible. More precisely $\sigma\vert_{\SL_2(\Z/p\Z)}\cong\tilde{\sigma}\oplus \tilde{\sigma}^h$ As a consequence the character $\chi_{\sigma}$ can only be described by a combination of two character of $\SL_2(\Z/p^{m_{\pi}}\Z)$. Indeed, $\chi_{\sigma}(zs)=\omega_{\sigma}(z)[\chi_{\tilde{\sigma}}(s)+\chi_{\tilde{\sigma}^h}(s)]$. The corresponding character values are listed in \cite[Table~IV]{kutzko} and the claimed bound is derived directly by ignoring any possible cancellation between the two characters $\chi_{\tilde{\sigma}}$ and $\chi_{\tilde{\sigma}^h}$. (Even though such cancellation can be observed in the $m_{\pi}=1$ situation this phenomenon does not seem to generalise.)

If $g\in hZ\cdot \SL_2(\Z/p^m\Z)$, then the argument proceeds similarly and we omit the details.

\end{proof}

Before continuing we will discuss our choice of $S$. But first recall that $d=\dim_{\C}\pi_p^{K_p(m_{\pi})} \asymp p^{m_{\pi}}$. Further note that $y_r=0$ unless $r=1,l_1,l_1l_2,l_1^2l_2^2$ for $l_1,l_2\in S$. Put $\Lambda = d^{\frac{1}{3}}$ or $\Lambda= d^{\frac{1}{6}}$. This is a slight spoiler but for experts in amplification it should be no surprise that this is the optimal size of the amplifier in this setting. Let $$S =\{ l \text{  prime }\colon l\asymp \Lambda \}.$$ By the prime number theorem (assuming $d$ is sufficiently large, which is no problem) we have $\sharp S \sim \Lambda/\log(\Lambda)$, but for us the following crude bound suffices 
\begin{equation}
	\sharp S \gg_{\epsilon} \Lambda^{1-\epsilon}. \nonumber
\end{equation}

Before we are ready to prove our key estimate we need to establish some counting results. Let
\begin{equation}
	M_z^{(\lambda)}(r) = \{ A\in \left[\begin{matrix} \Z &p^{\lambda}\Z \\ p^{\lambda} \Z & \Z	\end{matrix}\right]\colon \det(A)=r \text{ and } u(Az,z)\leq 1 \}.\nonumber
\end{equation}
The case $\lambda=0$ is easily handled using existing results. For example taking $N=\delta=1$ in \cite[Proposition~6.1]{templier_hybrid}. We follow standard procedure and write 
\begin{equation}
	M_z^{(\lambda)}(r)=M_{z,\star}^{(\lambda)}(r)\sqcup M_{z,p}^{(\lambda)}(r)\sqcup M_{z,u}^{(\lambda)}(r).\nonumber
\end{equation}
Here the subscript $\star$ indicated that we are dealing with \textit{generic} matrices $A=\left(\begin{matrix} a& b\\ c&d\end{matrix}\right)$ with $c\neq 0$ and $(a+d)^2\neq 4r$. On the other hand $u$ stands for \textit{unipotent} so that $A\in M_{z,u}^{(\lambda)}(r)$ if and only if $c=0$. Finally, $A\in  M_{z,p}^{(\lambda)}(r)$ if $c\neq 0$ and $(a+d)^2= 4r$. These are the \textit{parabolic} matrices. 

Counting the contribution of generic matrices is a standard lattice point counting argument, which we slightly modify. Note that our live is much easier, since we can take $z$ in the classical fundamental domain for $\SL_2(\Z)$.

\begin{lemmy} \label{generic}
For $\lambda>1$	and $y\geq \frac{\sqrt{3}}{2}$ we have
\begin{equation}
	\sum_{r\asymp K} [\sharp M_{z,\star}^{(\lambda)}(r)+M_{z,p}^{(\lambda)}(r)] \ll \frac{K^{\frac{3}{2}}}{p^\lambda}+\frac{K^2}{p^{2\lambda}}.\nonumber
\end{equation}
and 
\begin{equation}
	\sum_{\substack{r\asymp K, \\ r=\square}} \sharp M_{z,\star}^{(\lambda)}(r) \ll \frac{K^{1+\epsilon}}{p^\lambda}+\frac{K^{\frac{3}{2}+\epsilon}}{p^{2\lambda}}.\nonumber
\end{equation}
\end{lemmy}
We closely follow the argument in \cite{templier_hybrid}.
\begin{proof}
Write $A=\left(\begin{matrix} a&b \\ c & d\end{matrix}\right)$. Since $c\neq0$ and $c\ll \sqrt{r}y^{-1}$ we have $\ll \frac{K^{\frac{1}{2}}}{p^{\lambda}y}$ choices for $c$. Similarly, using the bound $\abs{a+d}\ll\sqrt{K}$, we have $\ll K^{\frac{1}{2}}$ possibilities to choose $a+d$. Finally, we have the bound
\begin{equation}
	\abs{-cz^2+(a-d)z+b}^2 \leq 4Ky^2.\nonumber
\end{equation}
Write $b=p^{\lambda}b'$ and consider the lattice $L=\langle z,p^{\lambda}\rangle$. Note that $L$ has cocolume $\asymp p^{\lambda}y$ and first successive minima $\gg 1$ (since $y\gg 1$). Let $B$ be the ball of radius $2\sqrt{K}y$ around $-cz^2$. Then we have
\begin{equation}
	\sharp\{ (b',a-d)\} \ll \sharp(B\cap L) \ll 1+\sqrt{K}y+\frac{Ky}{p^{\lambda}}.\nonumber
\end{equation}
We have counted the number of possibilities for the admissible quadruples $(c,b',a+d,a-d)$. Since each of those quadruples uniquely determines a matrix $A$ we have established
\begin{align}
	\sharp \{ A\in M_{z,\star}^{(\lambda)}(r)\colon r\asymp K \} &\ll \frac{K^{\frac{1}{2}}}{p^{\lambda}y}\cdot K^{\frac{1}{2}}\cdot  (1+\sqrt{K}y+\frac{Ky}{p^{\lambda}}) \nonumber \\
	&\ll  \frac{K}{p^{\lambda}}+\frac{K^{\frac{3}{2}}}{p^\lambda}+\frac{K^2}{p^{2\lambda}}.\nonumber
\end{align}

The case when we are only considering square matrices only needs a minor modification.  Indeed, instead of counting $a+d$ trivially as earlier we observe that
\begin{equation}
	(a-d)^2+4bc=(a+d)^2-4(\sqrt{r})^2. \nonumber
\end{equation}
If the matrix is parabolic the right hand side would be $0$. Thus we now consider only generic matrices. For those we can fix the left hand side first, so that we determine $(a+d,\sqrt{r})$ essentially as solutions to a generalised Pell equation. There are at most $\ll K^{\epsilon}$ possibilities.
\end{proof}

\begin{lemmy}\label{unipotent}
We have
\begin{equation}
	\sharp  M_{z,u}^{(\lambda)}(r) \ll r^{\epsilon}(1+\sqrt{r}p^{-\lambda}y). \nonumber
\end{equation}
\begin{align}
	&\sum_{l\in S}M_{z,u}^{(\lambda)}(l) \ll \Lambda+\frac{\Lambda^{\frac{3}{2}} y}{p^{\lambda}}, \nonumber \\
	&\sum_{l_1,l_2\in S}M_{z,u}^{(\lambda)}(l_1l_2) \ll \Lambda^2+\frac{\Lambda^3y}{p^{\lambda}} \text{ and }\nonumber\\
	&\sum_{l_1,l_2\in S}M_{z,u}^{(\lambda)}(l_1^2l_2^2) \ll \Lambda^2+\frac{\Lambda^{4}y}{p^{\lambda}}.\nonumber
\end{align}
\end{lemmy}
\begin{proof}
The first estimate follows analogously to \cite[(A.10)]{iwaniec-sarnak} using $p^\lambda\mid b$. Recall the bound $\abs{b}\ll \sqrt{r}y$ used in the process.

The other bounds are derived elementary using only the fact that $S$ contains only primes. (In contrast to \cite[Lemma~2.4]{harcos-templier_III} we do not need a lattice counting argument, because we have an additional congruence condition on $b$ that we can use.) We will only show the finial estimate, since the others are derived similarly.

There are $\ll \Lambda^2$ possible choices for $r=l_1^2l_2^2\asymp \Lambda^4$. Having fixed the determinant of this form we find that there are only $\ll 1$ choices for $(a,d)$ with $ad=l_1^2l_2^2$. Finally we observe that we can choose $b$ in $\ll 1+\frac{\Lambda^2y}{p^{\lambda}}$ ways, since $p^\lambda\mid b$ and $\abs{b}\ll \Lambda^2 y$. Putting these estimates together completes the proof.

\end{proof}

\begin{rem}\label{rem:vanish_c}
Suppose $y\geq \sqrt{3}{2}$. As in \cite[(A.7)]{iwaniec-sarnak} we have the bound 
\begin{equation}
	\abs{c}\leq \frac{\sqrt{8r}}{y} \leq \sqrt{\frac{2^5r}{3}}. \nonumber
\end{equation} 
Thus if $p^{\lambda}\geq 3,5\cdot\sqrt{r}$, we must have $c=0$. This is because $p^{\lambda}\mid c$. In this case we obtain
\begin{equation}
	M_z^{(\lambda)}(r)=M_{z,u}^{(\lambda)}(r). \nonumber
\end{equation}
\end{rem}

Finally we need to consider the parabolic contribution.

\begin{lemmy}\label{parabolic}
For $\lambda>1$	and $y\geq \frac{\sqrt{3}}{2}$ we have
\begin{equation}
	\sum_{\substack{r\asymp K, \\ r=\square}} \sharp M_{z,p}^{(\lambda)}(r) \ll \frac{K^{\frac{3}{2}}}{p^{\lambda}}.\nonumber
\end{equation}
\end{lemmy}
\begin{proof}
This follows along the lines of \cite[Lemma~14]{blomer-maga-harcos-milicevic_fields}.
\end{proof}

We can now prove the main estimate of this section.

\begin{lemmy}
Assume $p>3$. Suppose $\pi$ belongs to Case~1, ~2 or ~3.1. Then, for $\frac{\sqrt{3}}{2}\ll y\ll \sqrt{d}$ we have
\begin{equation}
	\Phi(x+iy) \ll \sqrt{T}d^{\frac{5}{6}}.\nonumber
\end{equation}
If $\pi$ belongs to Case~3.2, then we have the weaker bound
\begin{equation}
	\Phi(x+iy) \ll \sqrt{T}d^{\frac{11}{12}} \text{ for } \frac{\sqrt{3}}{2}\ll y\ll d^{\frac{1}{4}}.\nonumber
\end{equation}
\end{lemmy}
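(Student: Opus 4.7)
The plan is an amplified pre-trace argument: I apply Corollary \ref{pre-trace_inequality}, decompose the $g$-sum according to the filtration by scalar congruences, invoke the character bound of the previous lemma, and reduce the remaining work to a lattice point count for $M_z(r,g)$. Throughout, recall $d \asymp p^{m_\pi}$ and that the amplifier coefficient $y_r$ is supported on $r \in \{1, l_1 l_2, l_1^2 l_2^2 : l_i \in S\}$ with $r \ll \Lambda^4$.

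First I would write
\begin{equation}
\sum_{g \in \GL_2(\Z/p^{m_\pi}\Z)} |\chi_\sigma(g)|\, \sharp M_z(r,g) = \sum_{\lambda = 0}^{m_\pi} \sum_{g \in ZK_{m_\pi,\lambda}\setminus ZK_{m_\pi,\lambda+1}} |\chi_\sigma(g)|\, \sharp M_z(r,g)
\end{equation}
and insert $|\chi_\sigma(g)| \ll p^\lambda$ in Cases~1,~2,~3.1, respectively $p^{(m_\pi+\lambda)/2}$ in Case~3.2. For $g \in ZK_{m_\pi,\lambda}$ one has $g \equiv c_g I \pmod{p^\lambda}$ for some scalar $c_g$, so any $A \in M_z(r,g)$ admits the decomposition $A = c_g I + p^\lambda B$ with $B \in M_2(\Z)$. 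The condition $\det A = r$ becomes a quadratic equation in the entries of $B$, while $u(Az,z) \leq 1$ translates into $|{-}p^\lambda \gamma z^2 + p^\lambda \alpha z + p^\lambda \beta| \ll r^{1/2} y$ for $B = \bigl(\begin{smallmatrix}\ast & \beta \\ \gamma & \ast\end{smallmatrix}\bigr)$ with $a-d = p^\lambda \alpha$.

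Standard lattice point counting in a box of the appropriate size (handled e.g.\ by Marshall's method adapted to the scalar-shifted lattice) then gives
\begin{equation}
\sum_{g \in ZK_{m_\pi,\lambda}\setminus ZK_{m_\pi,\lambda+1}} \sharp M_z(r,g) \ll_\epsilon (dr)^\epsilon \, p^{m_\pi - \lambda}\!\left(1 + \frac{r^{1/2}}{p^\lambda y} + \frac{r^{1/2} y}{p^\lambda} + \frac{r}{p^{2\lambda}}\right),
\end{equation}
where the prefactor $p^{m_\pi - \lambda}$ counts the admissible scalars $c_g$ modulo $p^{m_\pi}$. Multiplying by the character bound yields $p^{m_\pi}$ (resp.\ $p^{(3m_\pi - \lambda)/2}$) uniformly in $\lambda$, times the bracket above.

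Summing over the amplifier support, the terms $r = 1$ contribute via $y_1 \asymp \sharp S$ and the $r \sim \Lambda^2, \Lambda^4$ terms via $|y_r| \ll 1$, so that after dividing by $(\sharp S)^2/d \asymp \Lambda^{2-\epsilon}/d$ the pre-trace inequality gives
\begin{equation}
\Phi(z)^2 \ll (dT)^\epsilon T \cdot \frac{d \cdot p^{m_\pi}}{\Lambda^2}\left( 1 + \frac{\Lambda^2 y}{p^{m_\pi}} + \frac{\Lambda^4}{p^{2 m_\pi}}\right)
\end{equation}
in Cases~1,~2,~3.1, with an extra factor $p^{m_\pi/2}$ in Case~3.2. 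Since $p^{m_\pi}\asymp d$ and $y \ll d^{1/2}$ (resp.\ $d^{1/4}$), the choice $\Lambda = d^{1/3}$ balances the first and third bracket terms in the first three cases and yields $\Phi^2 \ll T d^{5/3+\epsilon}$; the choice $\Lambda = d^{1/6}$ does the analogous job in Case~3.2 and yields $\Phi^2 \ll T d^{11/6+\epsilon}$. The restrictions on $y$ are precisely what is needed to keep the middle bracket term $\Lambda^2 y /p^{m_\pi}$ (from $\lambda = 0$) under control, which is why the Whittaker estimate of Lemma~\ref{lm:whitt_bound} is needed to cover the complementary region high in the cusp.

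The main obstacle will be the counting lemma: one must carry out the lattice point count inside the coset $c_g I + p^\lambda M_2(\Z)$ carefully, keeping uniform control in $\lambda$, $r$ and $y$, and verify that no term in the final bracket exceeds the target when $\Lambda$ is chosen as above. Subsidiary bookkeeping issues include handling the degenerate case $\gamma = 0$ (i.e. upper-triangular $A$) separately and confirming that the primes removed from $S_0$ really do give $\sharp S \gg \Lambda^{1-\epsilon}$.
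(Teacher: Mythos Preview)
Your decomposition into shells and application of the character bound follow the paper, but two concrete errors leave a genuine gap.

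First, the prefactor $p^{m_\pi-\lambda}$ in your counting lemma is spurious. The sum $\sum_{g\in ZK_{m_\pi,\lambda}\setminus ZK_{m_\pi,\lambda+1}}\sharp M_z(r,g)$ simply counts integer matrices $A$ with $\det A=r$, $u(Az,z)\le 1$ and $\overline A$ in that shell; each such $A$ determines a \emph{unique} $g$, so there is no scalar multiplicity. Indeed if $A\equiv c_gI\pmod{p^\lambda}$ then $c_g^2\equiv r\pmod{p^\lambda}$, which pins $c_g$ down to $O(1)$ residues, not $p^{m_\pi-\lambda}$. With your stated bound the $\lambda=0$ term alone contributes $p^{m_\pi}(1+r^{1/2}y+r)$ to the geometric side, and after the amplifier sum this already gives $\Phi^2\ll Td^2\Lambda^2$, worse than the local bound for any $\Lambda\ge 1$. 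Relatedly, your displayed endgame does not follow from your own bracket: with $\Lambda=d^{1/3}$ the three terms $1$, $\Lambda^2y/p^{m_\pi}$, $\Lambda^4/p^{2m_\pi}$ are of size $1$, $\ll d^{1/6}$, $d^{-2/3}$, so the front factor $Td^2/\Lambda^2$ produces $\Phi^2\ll Td^{3/2}$, not $Td^{5/3}$; and $\Lambda=d^{1/3}$ does not ``balance the first and third bracket terms'' (that would require $\Lambda=d^{1/2}$).

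Second, and more to the point, you have not used the reason $S$ was constructed the way it was. The amplifier primes were chosen so that no $r\neq 1$ in the support of $y_r$ satisfies $r\equiv 1\pmod p$. Hence for $\lambda\ge 1$ the congruence $A\equiv I\pmod{p^\lambda}$ forces $\det A\equiv 1\pmod p$ and therefore $r=1$; only the diagonal weight $y_1\asymp\sharp S$ survives. For $r=1$ the lower-left entry satisfies both $p^\lambda\mid c$ and $|c|\ll y^{-1}\ll 1$, so $c=0$, and one is reduced to counting upper-triangular unimodular integer matrices with $u(Az,z)\le 1$, which is $\ll 1+\sqrt{y}\,p^{-\lambda}$. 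This completely bypasses any lattice-point count in the shifted lattice $c_gI+p^\lambda M_2(\Z)$ for general $r$, and is what yields the clean geometric side $\Lambda^4+\Lambda^{5/2}y+\Lambda d$ from which $\Lambda=d^{1/3}$ genuinely delivers $\Phi^2\ll Td^{5/3}$.
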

\begin{proof}
We start with Cases~1, ~2 or ~3.1. Our starting point is Corollary~\ref{pre-trace_inequality}. Breaking the $g$-sum up into pieces on which we can estimate the character suing Lemma~\ref{character} we get
\begin{equation}
	\frac{(\sharp S)^2}{\dim_{\C}\pi_p^{K_p(m_{\pi})}} \cdot \Phi(z)^2 \ll  T\sum_{0\leq \lambda \leq m_{\pi}}p^{\lambda} \sum_{r} \frac{\abs{y_r}}{\sqrt{r}}  \cdot \sharp M_z^{(\lambda)}(r).\nonumber
\end{equation}

We first consider the contribution of $\lambda=0$. In this case the counting problem is independent of $p$ and relatively easy. Indeed we have
\begin{equation}
	\sum_{r} \frac{y_r}{\sqrt{r}}\sharp \{A\in M_2(\Z) \colon \det(A)=r \text{ and }u(Az,z)\leq 1 \} \ll \Lambda^4+\Lambda^{\frac{5}{2}}y.
\end{equation}
This is for example \cite[Proposition~6.1]{templier_hybrid} with $N=\delta=1$ and $z\in\mathcal{F}$ so that $y\gg 1$.

Next we assume $\lambda>0$. We summarise the results from Lemma~\ref{generic}, \ref{parabolic} and \ref{unipotent} in Table~\ref{tab:table1} below.
\begin{table}[h!]
	\begin{center}
		\caption{Summary of counting results}
		\label{tab:table1}
		\begin{tabular}{l||c|c|c} 
			$\sum_{r\in T}\frac{\abs{y_r}}{\sqrt{r}}\cdot\sharp M_{z,\lozenge}^{(\lambda)}(r)$ & $\lozenge= u$ & $\lozenge = \star$ & $\lozenge = p$\\
			\hline \hline
			$T=\{1\}$ & $\Lambda+\frac{\Lambda y}{p^\lambda}$&0&0 \\
			\hline
			$T=\{ l\colon l\in S\}$ &$\Lambda^{\frac{1}{2}}+\frac{\Lambda y}{p^\lambda}$ & $\frac{\Lambda}{p^\lambda}+\frac{\Lambda^{\frac{3}{2}}}{p^{2\lambda}}$& included in $\star$ \\
			\hline
			$T=\{ l_1l_2\colon l_1,l_2\in S\}$ &$\Lambda+\frac{\Lambda^2 y}{p^\lambda}$ & $\frac{\Lambda^{2}}{p^\lambda}+\frac{\Lambda^{3}}{p^{2\lambda}}$ & included in $\star$ \\
			\hline
			$T=\{ l_1^2l_2^2\colon l_1,l_2\in S\}$ &$1+\frac{\Lambda^2 y}{p^\lambda}$ & $\frac{\Lambda^{2+\epsilon}}{p^\lambda}+\frac{\Lambda^{4+\epsilon}}{p^{2\lambda}}$  &  $\frac{\Lambda^4}{p^\lambda}$ \\
			\hline
			\hline
			\textbf{total} & $\Lambda+\frac{\Lambda^2 y}{p^\lambda}$ & $\frac{\Lambda^{2+\epsilon}}{p^\lambda}+\frac{\Lambda^{4+\epsilon}}{p^{2\lambda}}$ & $\frac{\Lambda^4}{p^\lambda}$\\
			\hline
		\end{tabular}
	\end{center}
\end{table}
Note that for the contribution of $r=1$ we have used Remark~\ref{rem:vanish_c}. 
All together this gives a contribution of
\begin{equation}
	\sum_{r}\frac{\abs{y}}{\sqrt{r}}\cdot \sharp M_z^{(\lambda)}(r) \ll \Lambda+\frac{\Lambda^4}{p^\lambda}+\frac{\Lambda^2y}{p^\lambda}.\nonumber
\end{equation}

Inserting these estimates in our (amplified) pre-trace inequality we get
\begin{equation}
	\frac{(\sharp S)^2}{\dim_{\C}\pi_p^{K_p(m_{\pi})}} \cdot \Phi(z)^2 \ll  T\left(\Lambda^4+\Lambda^{\frac{5}{2}}y+\sum_{1\leq \lambda \leq m_{\pi}}p^{\lambda}\left[\Lambda+\frac{\Lambda^4}{p^\lambda}+\frac{\Lambda^2y}{p^\lambda}\right]\right).\nonumber
\end{equation}
This simplifies to
\begin{equation}
	\Phi(z)^2 \ll  \frac{Td^{1+\epsilon}}{\Lambda^{2-\epsilon}}\left(\Lambda^{\frac{5}{2}}y+\Lambda^4+\Lambda d \right). \nonumber
\end{equation}
Inserting $\Lambda=d^{\frac{1}{3}}$ yields $$\Phi(z)^2 \ll Td^{1+\epsilon}(d^{\frac{2}{3}}+d^{\frac{1}{6}}y)$$ which directly implies the result for the non-exceptional cases ~1, ~2 and ~3.1.

Finally if $\pi$ belongs to Case~3.2, then the same analysis with the weaker character estimates yields
\begin{align}
	\frac{(\sharp S)^2}{\dim_{\C}\pi_p^{K_p(m_{\pi})}} \cdot \Phi(z)^2 &\ll  T\left(d^{\frac{1}{2}}\Lambda^4+d^{\frac{1}{2}}\Lambda^{\frac{5}{2}}y+\sum_{1\leq \lambda \leq m_{\pi}}p^{\frac{m_{\pi}+\lambda}{2}}\left[\Lambda+\frac{\Lambda^4}{p^\lambda}+\frac{\Lambda^2y}{p^\lambda}\right]\right) \nonumber \\
	&\ll T\left(d^{\frac{1}{2}}\Lambda^4+d^{\frac{1}{2}}\Lambda^{\frac{5}{2}}y + \Lambda d^{1+\epsilon}\right). \nonumber
\end{align}
This prompts the choice $\Lambda=d^{\frac{1}{6}}$ and we find $$\Phi(z)^2 \ll Td^{1+\epsilon}(d^{\frac{5}{6}}+d^{\frac{7}{12}}y)$$ This completes the proof.
\end{proof}

\textit{Proof of Theorem~\ref{th:main}:}
We are now ready to complete the proof of our main theorem. First of all note that it is enough to bound $\Phi(z)$ for $z\in\mathcal{F}$. Next, if $y\ll \sqrt{d}$ (resp. $y\ll d^{\frac{1}{4}}$), then we are done by the previous lemma. For larger $y$ the Whittaker expansion (see Lemma~\ref{lm:whitt_bound}) gives even better result. \qed


\bibliographystyle{amsplain}			
\bibliography{lb}				

\end{document}